\documentclass{amsart}

\usepackage{amssymb}

\newcommand{\Ps}{\mathbf{P}}
\newcommand{\As}{\mathbf{A}}
\newcommand{\C}{\mathbf{C}}
\newcommand{\Q}{\mathbf{Q}}
\newcommand{\Z}{\mathbf{Z}}

\newcommand{\R}{\mathbf{R}}

\newcommand{\cO}{\mathcal{O}}

\newcommand{\cI}{\mathcal{I}}

\newtheorem{lemma}{Lemma}[section]
\newtheorem{proposition}[lemma]{Proposition}
\newtheorem{theorem}[lemma]{Theorem}
\newtheorem{corollary}[lemma]{Corollary}

\theoremstyle{definition}
\newtheorem{notation}[lemma]{Notation}
\newtheorem{definition}[lemma]{Definition}
\newtheorem{construction}[lemma]{Construction}

\theoremstyle{remark}
\newtheorem{remark}[lemma]{Remark}
\newtheorem{example}[lemma]{Example}
\DeclareMathOperator{\degsp}{degsp}

\DeclareMathOperator{\length}{length}
\DeclareMathOperator{\Gr}{Gr}

\DeclareMathOperator{\sing}{sing}
\DeclareMathOperator{\spe}{sp}

\title{Nodal deformations of hypersurfaces with an ordinary $m$-fold point}
\author{Remke Kloosterman}
\begin{document}
\begin{abstract}
Let $X\subset \Ps^n$ be a nodal hypersurface of degree $d$ with an ordinary $m$-fold point. Let $\delta_X$ be the largest value of $\delta$ such that $X$ is contained in the closure of the Severi variety of degree $d$ hypersurface in  $\Ps^n$ with $\delta$ nodes.

After recalling how the semicontinuity of the spectrum yields an upper  bound for $\delta_X$,  we produce various lower bounds for $\delta_X$.
These bounds are given by polynomials in $m$ of degree $n$,  with different leading coefficients.
\end{abstract}

\thanks{The author would like to thank Orsola Tommasi %and the anonymous referees 
for several comments on a previous version of this paper.
The author is a member  of INdAM-GNSAGA. This work was supported  by the UNIPD BIRD-SID-2024 
project ``Moduli spaces and positivity".}

\maketitle
\section{Introduction}

In this paper we work over the field of complex numbers.
Fix integers $n,d,\delta$ and denote by $V^n_{d,\delta}\subset \Ps(\C[x_0,\dots,x_n]_d)$ the Severi variety of degree $d$ hypersurfaces in  $\Ps^n$ with $\delta$ nodes. For a singular hypersurface $X\subset \Ps^n$  degree $d$ in $\Ps^n$ let $\delta_X$ be the maximal value of $\delta$ such that (the point corresponding to) $X$ lies in the closure of $V^n_{d,\delta}$.

In the case of irreducible plane curves it is known that $\delta_X$ equals the difference between the arithmetic genus of $X$ and geometric genus of $X$.

In this note, we recall an upper bound for $\delta_X$ in the case that $X$ has only isolated singularities and produce various lower bound for $\delta_X$ in case $X$ has a single singularity which is an ordinary $m$-fold point. 
This improves and extends bounds given by  Ciliberto and Galati  in a recent  paper \cite{CiGa26}, in which they discussed approaches for determining $\delta_X$ in the case of surfaces ($n=3$).

In Section~\ref{secUpper} we discuss an upper bound for $\delta_X$ derived from the so-called spectral bound. 
It is mostly an expository section and is intended to translate results from singularity theory into a more algebro-geometric language.
The upper bound for $\delta_X$ is  a consequence of the semi-continuity of the spectrum due to Varchenko \cite{VarSC} and Steenbrink \cite{SteSC}. 
The main result of that section applies to any hypersurface with isolated singularities. However, when applied to a hypersurface with an ordinary $m$-fold point we obtain:

\begin{theorem}[Corollary~\ref{corSpec}]
Let $X\subset \Ps^n$ be a hypersurface with an ordinary $m$-fold point and no other singularities. Suppose $X$ lies in the closure of the Severi variety of degree $d$ hypersurface with $\delta$ nodes. Then $d\geq m$ and 
\[ \delta_X \leq  \# \left\{ (k_1,k_2,\dots,k_n) \in (0,m)^n \cap \Z^n \mid \frac{1}{2}nm-m<\sum k_i \leq \frac{1}{2}nm \right\}. \]
In particular, if $n=3$ then 
\[ \delta_X\leq \left\{
\begin{array}{ll}
\frac{23}{48}m^3-m^2+\frac{7}{12} m &\mbox{ if } m\equiv 0\bmod 2\vspace{0.2cm}\\ 
\frac{23}{48}m^3-\frac{21}{16}m^2+\frac{49}{48}m-\frac{3}{16} &\mbox{ if } m\equiv 1 \bmod 2.\end{array}\right.\]
\end{theorem}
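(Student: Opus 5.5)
The plan is to deduce the bound from the semicontinuity of the spectrum (Varchenko \cite{VarSC}, Steenbrink \cite{SteSC}), applied to a local deformation of the $m$-fold point.

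\emph{Reduction to a local problem.} First, $d\geq m$ is immediate, since the multiplicity of a point on a degree $d$ hypersurface is at most $d$ (a line through the point and a general point of $X$ meets $X$ in at most $d$ points counted with multiplicity). Suppose $X\in\overline{V^n_{d,\delta}}$. Then there is a one-parameter family $X_t$, $t$ in a small disc, with $X_0=X$ and each $X_t$ ($t\neq 0$) having $\delta$ nodes. The singular points of $X_t$ specialize to singular points of $X$, and $X$ has a unique singular point $p$. Hence for a small Milnor ball $B$ around $p$ and $t$ small, all $\delta$ nodes of $X_t$ lie in $B$, and no other singularities of $X_t$ do. In affine coordinates $x_1,\dots,x_n$ centred at $p$, this gives a deformation $f_t$ of the local equation $f_0=f$ of the isolated singularity at $p$. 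The fibre $f_t$ has exactly $\delta$ $A_1$-points in $B$.

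\emph{Spectra.} Write $f=f_m+(\text{higher order terms})$, with $f_m$ a homogeneous form of degree $m$ defining a smooth hypersurface in $\Ps^{n-1}$; this is what an ordinary $m$-fold point means. Then $f$ is semi-quasihomogeneous with principal part $f_m$. The family $f_m+s(f-f_m)$ is $\mu$-constant, so $f$ has the same spectrum as $f_m$. Moreover, $f_m$ can be connected to $x_1^m+\dots+x_n^m$ through smooth forms, again a $\mu$-constant family. The spectrum of the Fermat singularity is the multiset
\[
\left\{\sum_{i=1}^n \frac{k_i}{m}-1 \;\middle|\; k_i\in\{1,\dots,m-1\}\right\}.
\]
A node in $n$ variables has the single spectral number $\frac n2-1$.

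\emph{Semicontinuity.} I would use the half-open interval form of the theorem: for a deformation $f_t$ of an isolated singularity $f$, and any interval $(a,a+1]$, the number of spectral numbers of $f$ in $(a,a+1]$ is at least the sum of the corresponding counts over all singular points of $f_t$ in $B$. Choose $a=\frac n2-2$. Each node then contributes exactly $1$, so $\delta$ is bounded by the number of $(k_i)$ with
\[
\frac n2-2<\sum_i \frac{k_i}{m}-1\leq \frac n2-1,
\]
that is, $\frac12 nm-m<\sum k_i\leq \frac12 nm$. This is the displayed bound, and since it holds for every admissible $\delta$, it holds for $\delta_X$.

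\emph{The case $n=3$.} Here one counts lattice points $(k_1,k_2,k_3)\in\{1,\dots,m-1\}^3$ in the slab $\frac32m-m<\sum k_i\leq\frac32 m$. I would use the coefficients of $\bigl((t-t^m)/(1-t)\bigr)^3$, i.e.\ the number $c(s)$ of compositions of $s$ into three parts in $[1,m-1]$, given by the inclusion--exclusion formula
\[
c(s)=\binom{s-1}{2}-3\binom{s-m}{2}+3\binom{s-2m+1}{2}.
\]
Sum $c(s)$ for $s$ from $\lfloor m/2\rfloor+1$ to $\lfloor 3m/2\rfloor$, treating $m$ even and $m$ odd separately. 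The result is a cubic quasi-polynomial, which can be checked against small values of $m$ (e.g.\ $m=2$ gives $1$, $m=3$ gives $7$).

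\emph{Main obstacle.} The substantive step is the justification that the spectrum is constant along the $\mu$-constant family, together with quoting the correct half-open form of semicontinuity; the $n=3$ count is routine bookkeeping.
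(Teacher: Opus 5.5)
Your argument is correct and is essentially the paper's: Steenbrink's half-open semicontinuity on the interval containing the node's spectral number ($(\frac n2-1,\frac n2]$ in the paper's Saito normalization, your $(\frac n2-2,\frac n2-1]$ in Steenbrink's), applied to the Fermat spectrum via $\mu$-constancy, with your localization and $d\geq m$ arguments filling in steps the paper leaves implicit. One slip in your sanity check: for $m=3$ the admissible tuples are $(1,1,1)$ and the three permutations of $(1,1,2)$, so the count is $4$, not $7$, which agrees with the paper's odd-case formula and its remark that $\delta_S\leq 4$ for triple points.
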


This bound is different from, but close to, Varchenko's upper bound on the number of nodes on a surface of degree $m$ in $\Ps^n$. This latter bound is a corollary of   Varchenko's  result  that  any open interval of length 1 is a semicontinuity set for the spectrum of  a low-weight deformation of a weighted homogeneous singularity. In our set-up the singularity is not always weighted homogeneous. To resolve this, we apply  Steenbrink's result that any half open interval of length 1 is a semicontinuity set for any deformation of an isolated singularity. This leads to an upper bound for $\delta_X$ which is slightly larger than the upper bound for the number of nodes on a degree $m$ surface in $\Ps^3$. However  both bounds have the same degree and the same leading coefficient, when considered as polynomials in $m$.

In the following two sections we prove three lower bounds for $\delta_X$. Denote with $\mathcal{T}_{d,m}^n \subset  \Ps(\C[x_0,\dots,x_n]_d)$ the locus of hypersurfaces in $\Ps^n$ of degree $d$ with an ordinary $m$-fold point as singularity.

In Section~\ref{secLower} we show the first lower bound:
\begin{theorem}[Theorem~\ref{thmLowerSur}]  Let $m,d$ be positive integers such that $ d\geq m>1$.
Let $\delta:=(\lfloor \frac{ m}{3}\rfloor+1)^{3}$ if $3\nmid d$ and  $\delta:=\frac{m}{3}( \frac{m}{3}+1)^2$  if $3\mid m$. 
 Then $\mathcal{T}^3_{d,m}$ is contained in the closure of $V^3_{d,\delta}$. 
 
 In other words, if $X$ be a surface of degree $d$ with an ordinary $m$-fold point and no further singularities, then there exists a family of degree $d$ surfaces such that the general fiber is a surface of degree $d$ with $\delta$ nodes and the special fiber is  $X$.
\end{theorem}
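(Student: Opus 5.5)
The strategy is to deform the tangent cone first and then extend the deformation to degree $d$. Two reductions come first.

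\emph{Irreducibility.} The locus $\mathcal{T}^3_{d,m}$ is irreducible: it is a projective bundle over $\Ps^3$, namely degree $d$ forms vanishing to order $m$ at a point, with an open condition on the tangent cone. So it suffices to exhibit one member of $\mathcal{T}^3_{d,m}$ that is a limit of $\delta$-nodal surfaces, and then to use that the closure of $V^3_{d,\delta}$ is closed.

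\emph{Surjectivity of the map to the tangent cone.} Put the $m$-fold point at $p=(1:0:0:0)$ and write
\[ F = x_0^{d-m}f_m(x_1,x_2,x_3)+x_0^{d-m-1}f_{m+1}+\dots+f_d. \]
The map from such $F$ to the tangent cone $f_m$ is surjective. By the same irreducibility argument, the closure of $V^3_{d,\delta}$ contains all of $\mathcal{T}^3_{d,m}$ as soon as it contains a single $F$ with $f_m$ general.

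\emph{The local model.} Choose $f_m$ to be the product of $m$ general linear forms
\[ f_m=\prod_{i=1}^m \ell_i(x_1,x_2,x_3). \]
Its zero set in $\Ps^2$ is $m$ general lines, so it is smooth as a plane curve and the point $p$ is an ordinary $m$-fold point. Split the lines into three groups of sizes $a,b,c$ with $a+b+c=m$:
\[ f_m=A\,B\,C,\qquad A=\prod_{i=1}^a \ell_i,\quad B=\prod_{i=1}^b \ell'_i,\quad C=\prod_{i=1}^c \ell''_i. \]
Deform to $F_t = x_0^{d-m}A_tB_tC_t + (\text{higher terms})$, where $A_t$ is the product of $a$ \emph{affine} planes $\ell_i - t\alpha_i x_0$, and similarly for $B_t$ and $C_t$. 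In the affine chart $x_0=1$, the leading part $A_tB_tC_t=0$ is an arrangement of $m$ planes in general position. Its singular locus consists of lines, and the goal is to produce the nodes from the triple points of the arrangement.

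\emph{Getting nodes.} Consider the surface $A_tB_tC_t+\varepsilon g=0$, with $g$ a cubic-type smoothing term. The triple points of a "grid" arrangement, taken one plane from each of the three groups, give $abc$ points where three planes meet. The key mechanism is the following. A perturbation $xyz+\varepsilon g$ with $g$ vanishing at the origin to order $\ge 2$ and smoothing the double lines leaves an $A_1$ singularity at each triple point, since the local model $xyz+\varepsilon(x^2+y^2+z^2)$ has a node at the origin. This is the classical Cayley-type construction of nodal surfaces from products of three pencils. To realize it I would take $g=G_1G_2G_3\cdot(\dots)$ or, more robustly, use $F_{t}=ABC+ t\,Q$ with $Q$ chosen via three auxiliary products. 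The choice $a=b=c$ optimizes $abc$ under $a+b+c=m$, which gives $\delta=(m/3)^3$-ish. The extra $+1$ in $(\lfloor m/3\rfloor+1)^3$ suggests using $x_0$-translates as well: one works with products of $\lfloor m/3\rfloor+1$ planes, including planes through the origin, whose combined degree can still be accommodated. The asymmetric count $\frac{m}{3}(\frac m3+1)^2$ when $3\mid m$ reflects that only two groups can gain the extra plane. I would determine these counts by a degree bookkeeping once the construction is fixed.

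\emph{The main obstacle.} The hardest step is to verify that after adding the smoothing term and the higher-order terms $f_{m+1},\dots,f_d$, the only singularities of the general member are the $\delta$ nodes. This means no singularities at infinity or away from the grid, and the nodes must survive for general $t$. I would handle it in three steps:
\begin{itemize}
\item a Bertini argument on the linear system spanned by $ABC$ and the smoothing term, whose base locus is exactly the grid points;
\item a local computation at each grid point showing the Hessian is nondegenerate;
\item a check that the limit as $t\to0$ returns the chosen $F$ via the weighted rescaling $x_i\mapsto t x_i$.
\end{itemize}
The rescaling in the third step shows the family is a genuine degeneration in $\Ps(\C[x_0,\dots,x_3]_d)$ whose nodes all collide at $p$.
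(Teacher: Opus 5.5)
There is a genuine gap, and it starts with your local model. A product of $m\ge 2$ general linear forms defines $m$ general lines in $\Ps^2$. That curve has $\binom{m}{2}$ nodes, so it is not smooth, and $x_0^{d-m}ABC+\cdots$ does \emph{not} have an ordinary $m$-fold point at $p$. Your family therefore degenerates to a point of $\overline{\mathcal{T}^3_{d,m}}\setminus\mathcal{T}^3_{d,m}$. That tells you nothing about members of $\mathcal{T}^3_{d,m}$: closure passes from general to special, not back.

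Your irreducibility reductions are also wrong as stated. The set $\overline{V^3_{d,\delta}}\cap\mathcal{T}^3_{d,m}$ is closed in the irreducible variety $\mathcal{T}^3_{d,m}$, so you must show it contains a \emph{general} member. One point is not enough, and neither is one $F$ over a general $f_m$. In particular you must allow arbitrary higher-order terms $f_{m+1},\dots,f_d$, and this is the real difficulty. Let $g$ be a $\delta$-nodal affine model of degree $m$ with top part $g_m=f_m$. The family $t^mg(x/t)+f_{m+1}(x)+\cdots+f_d(x)$ becomes, after $x=tu$, $t^m\bigl(g(u)+tf_{m+1}(u)+\cdots+t^{d-m}f_d(u)\bigr)$. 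This perturbation smooths the nodes of $g$ unless it is compensated by suitable corrections. Nothing in your closing checklist (Bertini, Hessian, rescaling) supplies those corrections.

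The paper does not build the degeneration by hand. It invokes \cite[Theorem 3.3]{CiGa26}: if $V^3_{m,\delta}$ has a very regular component, i.e.\ a $\delta$-nodal surface of degree $m$ whose node set $\Sigma$ satisfies $h^1(\cI_\Sigma(m))=h^1(\cI_\Sigma(m-1))=0$, then $\mathcal{T}^3_{d,m}\subset\overline{V^3_{d,\delta}}$ for every $d\ge m$. That theorem is what handles an arbitrary member of $\mathcal{T}^3_{d,m}$. The paper's own work is producing such a surface: $V(\sum G_{ij}F_iF_j)$ with $F_1,F_2,F_3$ general of degree $e=\lfloor m/3\rfloor+1$, or of degrees $\frac m3+1,\frac m3+1,\frac m3$ if $3\mid m$. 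Its nodes are exactly the complete intersection $\Sigma=V(F_1,F_2,F_3)$. The Koszul resolution then gives $h^1(\cI_\Sigma(k))=0$ for $k\ge\sum\deg F_i-3$.

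This also explains the count you could not reach. Your product $ABC$ with $a+b+c=m$ caps the grid at $abc\le(m/3)^3$, strictly below the claimed $\delta$. The paper instead takes a general element of $(F_1,F_2,F_3)^2$, which only needs the generators $F_iF_j$ to have degree at most $m$. Your $ABC+\varepsilon g$ with $g\in(A,B,C)^2$ is a special surface of this type. The value $e=\lfloor m/3\rfloor+1$ is the largest $e$ with $3e-3\le m-1$ when $3\nmid m$, so it is dictated by very regularity, not by degree bookkeeping. Without the Ciliberto--Galati input, the explicit rescaling route of Section~\ref{secLowSec} reaches a general $X$ only with $\lfloor\frac{m+3}{4}\rfloor^3$ nodes. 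The reason is that every $f_k$ must then lie in the square of the ideal generated by the $F_i(0,x_1,x_2,x_3)$.
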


The construction uses the deformation result \cite[Theorem 3.3]{CiGa26}, which can only be applied to surfaces.

In Section~\ref{secLowSec} we prove results in arbitrary dimension. For this we use give an explicit deformations.

\begin{theorem}[Theorem~\ref{thmAll}] Fix integers $d,m,n$ with $n\geq 3$ and  $d\geq m>1$. Let $\delta:= \lfloor\frac{m+n}{n+1}\rfloor^n$.
 Then $\mathcal{T}^n_{d,m}$ is contained in the closure of $V^n_{d,\delta}$. 
Equicalently, if $X$ is a surface of degree $d$ with an ordinary $m$-fold point and no further singularities then $\delta_X\geq \delta$.
\end{theorem}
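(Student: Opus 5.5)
We prove the statement by an explicit one-parameter deformation in affine coordinates centred at the $m$-fold point. Put the singular point at $p=(1:0:\dots:0)$ and use coordinates $x_1,\dots,x_n$ on the chart $x_0=1$. Then $X$ is given by
\[ f = f_m + f_{m+1} + \dots + f_d, \]
where $f_m$ defines a smooth hypersurface in $\Ps^{n-1}$; this is what makes the point ordinary. Since $\mathcal{T}^n_{d,m}$ is irreducible and the closure of $V^n_{d,\delta}$ is closed, it suffices to show that a general member of $\mathcal{T}^n_{d,m}$ lies in that closure. We may therefore assume $f$ is as general as we like among hypersurfaces with an ordinary $m$-fold point at $p$. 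We first construct one good model with $\delta$ nodes near $p$, and then argue that every member of $\mathcal{T}^n_{d,m}$ degenerates from such models.

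\textbf{Model.} Write $k=\lfloor\frac{m+n}{n+1}\rfloor$. Then $(n+1)k-n\le m$, so $(n+1)(k-1)+1\le m$. For each coordinate choose a polynomial $g_i(x_i)$ of degree $k$ with distinct roots $a_{i,1},\dots,a_{i,k}$. Consider
\[ F=\sum_{i=1}^n \prod_{j\ne i} g_j(x_j)^2\cdot h, \]
or, more simply, a Chebyshev-type product $G=\prod_i g_i(x_i)$. The key step is to write down a polynomial $\Phi_t$ of degree at most $d$ with the following properties:
\begin{enumerate}
\item its degree-$m$ leading part at the origin, after rescaling $x\mapsto tx$, tends to $f_m$;
\item it has nodes at the $k^n$ lattice points $(a_{1,j_1},\dots,a_{n,j_n})$ scaled by $t$.
\end{enumerate}
Concretely, I would take $\Phi_t(x)=t^{m}\,\Psi(x/t)+ f_{m+1}+\dots+f_d$. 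Here $\Psi$ is a polynomial of degree $m$ whose top-degree part is $f_m$ and which is singular exactly at the $k^n$ lattice points. A node at a point means vanishing of the value and the $n$ first derivatives there, which is $n+1$ conditions. Counting gives $k^n(n+1)$ linear conditions on the coefficients of the lower-order terms of $\Psi$. These are all coefficients of degree $<m$, and there are $\binom{m-1+n}{n}$ of them.

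\textbf{Conditions independent.} The main obstacle is proving that these conditions are independent and that the resulting singularities are nodes. For independence, I would use the product structure: interpolation of value and gradient on a grid $S_1\times\dots\times S_n$ with $|S_i|=k$. The inequality $(n+1)k-n\le m$ should be exactly what makes the multivariate Hermite interpolation problem with $1$-jets on a $k^n$ grid solvable using polynomials of degree $\le m-1$. The monomials $x^\alpha$ with each $\alpha_i\le 2k-1$ would do, but they have too large total degree. Instead, I would use the explicit solution
\[ \Psi = f_m + c\,\sum_{i} \Big(\prod_{j} g_j(x_j)\Big)\cdot(\text{correction}) \]
or, better, argue via Terracini/Alexander–Hirschowitz-type counting on grids. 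My first attempt is an induction on $n$. One restricts to the hyperplanes $x_n=a_{n,j}$ and uses that a polynomial vanishing to order $2$ on $k$ parallel hyperplanes is divisible by $g_n^2$, which costs $2k$ in degree. Iterating this reduction should land precisely on the bound $m\ge (n+1)k-n$.

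\textbf{Nodes and conclusion.} For nodality, the Hessians at the grid points are an open condition, so it suffices to exhibit one $\Psi$ where they are nondegenerate. One could take $\Psi$ close to $\sum_i \lambda_i g_i(x_i)^2\cdot(\dots)$, whose Hessian at the grid points is diagonal with nonzero entries. Finally, for $t\ne0$ the hypersurface $\Phi_t$ has these $k^n$ nodes inside a ball of radius $O(t)$, and no other singularities near $p$ by genericity. It is smooth away from $p$ because $f$ is, so it lies in $V^n_{d,\delta}$. Since $\Phi_0=f$, the hypersurface $X$ lies in the closure of $V^n_{d,\delta}$. Note that $d\ge m$ ensures the degree is not exceeded.
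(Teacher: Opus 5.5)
There is a genuine gap at the central step, the definition $\Phi_t(x)=t^m\Psi(x/t)+f_{m+1}+\dots+f_d$.

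\textbf{Why the nodes do not survive.} Only the first summand vanishes to order two at the points $ta$. The terms $f_{m+1},\dots,f_d$ are added unchanged and do not. Substituting $x=ty$ gives $t^{-m}\Phi_t(ty)=\Psi(y)+tf_{m+1}(y)+\dots+t^{d-m}f_d(y)$, a small perturbation of $\Psi$. Each Morse point $a$ of $\Psi$ persists as a Morse point $a+O(t)$. Its critical value, however, becomes $tf_{m+1}(a)+O(t^2)$, which is nonzero for generic $f$ and small $t\neq 0$. So $\{\Phi_t=0\}$ is smooth near every $ta$, and your family has no nodes at all. A node needs the critical value to be exactly zero, and small perturbations do not preserve that.

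\textbf{The missing idea.} This is precisely the content of the paper's proof: the \emph{entire} equation, in every degree from $m$ to $d$, must lie in the square of the ideal of the moving points. The paper writes $X=V(\sum_{k=0}^{d-m}x_0^kh_k)$ and solves $\sum_{i\le j}F_i(0,x)F_j(0,x)g_{ijk}=h_k$ for \emph{every} $k$. This needs $h_{I^2}(d')=0$ for all $d'\ge m$, not only for $d'=m$. It then deforms via $\tilde F_i=F_i(tx_0,x_1,\dots,x_n)$ as in Construction~\ref{ConstMain}, so that the equation of $X_t$ lies in $(\tilde F_1,\dots,\tilde F_n)^2$ for every $t$.

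In your affine language the repair is $\Phi_t(x)=\sum_{j=m}^{d}t^j\Psi_j(x/t)$. Here each $\Psi_j$ has degree $j$, leading form $f_j$, and vanishes to order two on the grid. The grid is the complete intersection $g_1(x_1)=\dots=g_n(x_n)=0$, with leading forms $x_i^k$. So such $\Psi_j$ exist exactly when $f_j\in(x_1^k,\dots,x_n^k)^2$. With this repair your grid is just the special case $F_i=x_0^kg_i(x_i/x_0)$ of the paper's construction.

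\textbf{The interpolation step.} Separately, this step is only asserted, not proved, and the threshold you state is off by one. The monomial $x_1^{2k-1}x_2^{k-1}\cdots x_n^{k-1}$ has degree $(n+1)k-n$ and does not lie in $(x_1^k,\dots,x_n^k)^2$. Equivalently, for the grid ideal $J$, the Castelnuovo--Mumford regularity of $S/J^2$ is $(n+1)k-n$. Hence $1$-jet interpolation on the $k^n$ grid by polynomials of degree $\le m-1$ is solvable for all data only when $m\ge (n+1)k-n+1$. For example, $n=3$, $k=2$, $m=5$ gives $32$ conditions, but the image has dimension only $29$.

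The paper's auxiliary lemma has the same boundary problem. For a complete intersection the socle of $S/I$ sits in degree $\sum d_i-n$, so $h_I(\sum d_i-n)=1$, not $0$. Consequently, when $m\equiv 1\pmod{n+1}$, i.e.\ $(n+1)\lfloor\frac{m+n}{n+1}\rfloor-n=m$, one has $h_{I^2}(m)=n\neq 0$. Then the degree-$m$ piece $h_{d-m}$ of a generic $X$ is not in $I^2$ for general $F_i$. So both arguments, once repaired as above, give the bound $\lfloor\frac{m+n-1}{n+1}\rfloor^n$. The stated value $\lfloor\frac{m+n}{n+1}\rfloor^n$ needs an extra argument precisely when $m\equiv1\pmod{n+1}$.
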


For $n=3$ (the surface case) this bound is slightly worse than the previous bound. However, the construction is more explicit and works in arbitrary dimension. Moreover, this construction produces examples with much higher values of $\delta$:

\begin{theorem}[Theorem~\ref{thmExist}] Fix integers $d,m,n$ with $n\geq 3$ and  $d\geq m>1$. Then there exists a hypersurface of degree $d$ in $\Ps^n$ with an ordinary $m$-fold point and no further singularities such that  $\delta_{X}\geq  \lfloor \frac{m}{2}\rfloor ^n$.
\end{theorem}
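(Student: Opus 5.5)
We construct $X$ explicitly with its $m$-fold point at $p=(1:0:\dots:0)$, so that its affine equation is $f=f_m+f_{m+1}+\dots+f_d$ in $x_1,\dots,x_n$. The key choice is the tangent cone
\[ f_m=\prod_{i=1}^n x_i^{\epsilon}\,(x_i-a_{i,1})\cdots \]
built from one-variable factors. More precisely, we start from the Chebyshev-type degenerate family
\[ F_t=\sum_{i=1}^n g(x_i,t), \qquad g(x,t)=t^m\,T_m(x/t), \]
where $T_m$ is a polynomial of degree $m$ chosen such that $T_m$ has $\lfloor m/2\rfloor$ critical points with critical value $+1$ and the rest with critical value $-1$ (Chebyshev polynomials do exactly this). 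Then $g(x,t)=x^m+t(\dots)$, and at $t=0$ the hypersurface $\sum x_i^m=0$ has an ordinary $m$-fold point at the origin.

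For $t\neq 0$, consider $F_t-c\,t^m$ with $c$ chosen so that the critical value sum matches. Critical points of $\sum_i g(x_i,t)$ are tuples of critical points of $g$. If we choose critical values $\pm t^m$ and take $c=n$, then every tuple of critical points with value $+1$ lies on the zero set. Since the critical points of $T_m$ are nondegenerate, these are nodes, giving $\lfloor m/2\rfloor^n$ of them. Combinations of $\pm1$ values other than all $+1$ give other critical values (assuming $n\ge1$), hence do not lie on the hypersurface. So the affine part of $F_t-nt^m=0$ has exactly $\lfloor m/2\rfloor^n$ nodes, all tending to the origin as $t\to0$. At $t=0$ the equation is $\sum x_i^m$, which has an ordinary $m$-fold point.

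Next we homogenize to degree $d$. Take $X: \sum_i x_i^m x_0^{d-m}+h_d(x_1,\dots,x_n)=0$ with $h_d$ general of degree $d$, and deform it to
\[ X_t:\ x_0^{d-m}\Big(\sum_i \tilde g(x_i,x_0,t)-nt^m x_0^m\Big)+h_d=0, \]
where $\tilde g$ is the homogenization of $g$. We must check that:
\begin{enumerate}
\item $X$ has only the ordinary $m$-fold point. This holds by Bertini-type genericity of $h_d$: away from $x_0=0$ we can absorb the remaining degrees of freedom by adding general terms of degrees $m+1,\dots,d$, and the singularities along $x_0=0$ are excluded by choosing $h_d$ smooth.
\item For small $t\ne0$ the $\lfloor m/2\rfloor^n$ nodes near $p$ persist. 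Rescaling $x=ty$ shows the local equation is $t^m(\sum T_m(y_i)-n)+O(t^{m+1})$, so the nondegenerate critical points persist by the implicit function theorem.
\end{enumerate}
Other singularities of $X_t$ may appear, but only in a small neighbourhood of $\sing X=\{p\}$. Near $p$, after rescaling, the limit singularities are exactly the nodes listed above, plus possibly some at infinity of the rescaled chart. The latter are controlled because the leading form $\sum y_i^m$ is smooth at infinity. Hence $X_t$ is nodal with $\delta\ge\lfloor m/2\rfloor^n$ nodes. It then remains to pass from "at least" to lying in $\overline{V_{d,\delta}}$: we can smooth extra nodes independently, since nodes impose independent conditions for small numbers, or simply note that the closure of $V_{d,\delta'}$ for $\delta'\ge\delta$ lies in that of $V_{d,\delta}$.

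The main obstacle is making the choice of $T_m$ precise: we need a degree-$m$ polynomial with $m-1$ distinct nondegenerate critical points, $\lfloor m/2\rfloor$ of them with the same critical value $1$, and no combination of the other critical values summing to $n$. The Chebyshev polynomial $T_m$ has critical values alternating $\pm1$, with $\lceil (m-1)/2\rceil$ or $\lfloor (m-1)/2\rfloor$ of each sign. To reach $\lfloor m/2\rfloor$ we pick the sign occurring more often. In the case $m$ even with $m-1$ odd, this gives $m/2$ critical points of one value, matching $\lfloor m/2\rfloor$. For $m$ odd it gives $(m-1)/2=\lfloor m/2\rfloor$ of each sign. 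The value $n$ is reached only by the all-$+1$ tuples.
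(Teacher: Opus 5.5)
\textbf{There is a genuine gap in step (2), the persistence of the nodes after $h_d$ is added. It breaks the argument for every $d>m$.}

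In the chart $x_0=1$ with $x=ty$, the equation of your $X_t$ is $t^m\bigl(\sum_i T_m(y_i)-c+t^{d-m}h_d(y)\bigr)$. The implicit function theorem does preserve each nondegenerate critical point $y^*$ of $\sum_iT_m(y_i)-c$: it becomes a critical point $y^*(t)=y^*+O(t^{d-m})$ of the perturbed function. But a node of the \emph{hypersurface} also needs critical value $0$, and the critical value becomes $t^{d-m}h_d(y^*)+O(t^{2(d-m)})$. This is nonzero for general $h_d$ whenever $y^*\neq 0$. Having a node is a codimension-one condition, not an open one. Since the values $h_d(y^*)$ differ from node to node, no $t$-dependent choice of $c$ can repair this. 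So for general $h_d$ and small $t\neq0$ the nodes at $ty^*$ with $y^*\neq 0$ are smoothed. Your family therefore does not exhibit $X$ as a limit of hypersurfaces with $\lfloor m/2\rfloor^n$ nodes.

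For $d=m$ the claimed $O(t^{m+1})$ is also false, since $h_m$ enters at order $t^m$. There, however, you can take $h_m=0$. Then $X=V(\sum_i x_i^m)$ is the cone over the Fermat hypersurface, and $X_t$ is the projective closure of $\{\sum_i T_m(y_i)=c\}$, which is smooth at infinity. So your argument does prove the case $d=m$.

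What is missing for $d>m$ is that every term added to reach degree $d$ must vanish to second order along the moving node set, for \emph{every} $t$. This is exactly how the paper proceeds. It takes general forms $F_1,\dots,F_n$ of degree $\lfloor m/2\rfloor$ and the family $\sum_{i\le j}\tilde G_{ij}\tilde F_i\tilde F_j$ of Construction~\ref{ConstMain}. Here $\tilde F_i=F_i(tx_0,x_1,\dots,x_n)$, and the $x_0$-expansion of $G_{ij}$ is weighted by $t^{\max(0,k-d+m)}$. Membership in $(\tilde F_1,\dots,\tilde F_n)^2$ keeps all $\lfloor m/2\rfloor^n$ points of $V(\tilde F_1,\dots,\tilde F_n)$ singular for each $t\neq 0$; they are nodes for general $t$ because they are at $t=1$. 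The weighting makes $X_0$ have multiplicity exactly $m$ at $(1:0:\dots:0)$ with a general tangent cone.

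Your model is in fact a diagonal instance of this. Let $\varepsilon\in\{\pm1\}$ be the critical value you select. Then $T_m(y)-\varepsilon=2^{m-1}a(y)P(y)^2$, where $P$ has the $\lfloor m/2\rfloor$ selected critical points as simple roots and $\deg a=m-2\lfloor m/2\rfloor$. Hence your affine equation is $2^{m-1}\sum_i a(y_i)P(y_i)^2$. To repair the argument, replace $h_d$ by terms $\sum_{i\le j}H_{ij}\tilde P_i\tilde P_j$, with $\tilde P_i=(tx_0)^{\lfloor m/2\rfloor}P(x_i/(tx_0))$ and the $H_{ij}$ weighted as in the paper. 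The Bertini-type checks on $X_0$ and on $X_t$ must then be redone in that setting, not for $x_0^{d-m}\sum_i x_i^m+h_d$.
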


In Section~\ref{secCurve} we show that the spectral upper bound in the curve case yields precisely the well-known upper bound $\delta_C\leq p_a(C)-p_g(C)$.

\section{The spectrum of an isolated singularity and an upper bound for $\delta_X$}\label{secUpper}
Let $X$ be a projective hypersurface with isolated singularities. 
In this section we  explain how the spectra of the singularities of $X$ give an upper bound on $\delta_X$.
For a more extensive introduction to the spectrum of a singularity see \cite{DucoNotes} or \cite[Section II.8]{Kuli}. The former reference contains also a discussion of the spectral bound, and references to the original results.

The spectrum of a singularity can be both represented as a multisum of elements from $\Q$, or as ($\frac{1}{s}$ times) a polynomial in a fractional power of $s$. For expository reasons we prefer the latter choice, however, the above mentioned references prefer the former.

\begin{notation} Fix the following:
\begin{itemize}
\item $R= \Z[s^{\alpha}\colon \alpha \in \Q_{>0}]=\Z[s^{1/k}\colon k\in \Z_{>0}] $, the ring of spectral polynomials,
\item $f\in \C[x_1,\dots,x_n]$,  a polynomial with an isolated singularity at the origin,
\item $M(f)= \C\{x_1,\dots,x_n\}/(\frac{\partial f}{\partial x_1},\dots,\frac{\partial f}{\partial x_n})$, the Milnor algebra of $f$,
\item $\mu=\dim_{\C} M(f)$, the Milnor number of $f$,
\item $F$, the Milnor fiber of $f$,
\item $T^*$, the monodromy operator acting on $H^{n-1}(F)$.
\end{itemize}
For $\alpha \in \Q$, $-1<\alpha\leq 0$ denote with $C_{\alpha}$ the generalized eigenspace of $T^*$ acting on $H^{n-1}(F)$ for the eigenvalue $\exp(2\pi \sqrt{-1} \alpha)$. 
\end{notation}

\begin{remark}
 Consider the cohomology group  $H^{n-1}(F,\C)$. The dimension of this vector space equals the Milnor number $\mu$. 
The cohomology group $H^{n-1}(F,\C)$ has a natural mixed Hodge structure. Moreover, the monodromy operator $T$ induces an action $T^*$ on $H^{n-1}(F,\C)$, whose semisemplification has finite order. For $-1<\alpha\leq 0$ the generalized eigenspace  $C_{\alpha}$ has an induced  Hodge filtration.
\end{remark}
\begin{definition}
The \emph{spectrum} or \emph{spectral polynomial} of the isolated singularity $f$ is defined as 
\[ \sum_{-1<\alpha\leq 0}\sum_{p=0}^{n-1} \left(\dim_{\C} \Gr_F^{n-1-p} C_{\alpha}\right) s^{\alpha+p+1}\in R \]
and denoted by $\spe(f)$.
\end{definition}

\begin{remark}
The above definition yields the spectral polynomial associated with the Saito spectrum of the singularity. The Steenbrink spectrum can be obtained from the Saito spectum by shifting all spectral numbers  by one to the left. Hence the spectral polynomial associated with the Steenbrink spectrum  is the polynomial associated to the Saito spectrum divided by $s$, and is an element of $s^{-1}R$. 

Using the symmetry of Hodge numbers and the fact that the characteristic polynomial of the monomial operator $T$ is a polynomial with rational coefficients one obtains
\[ s^n\spe(f)\left(\frac{1}{s}\right)=\spe(f)(s),\]
i.e., the Saito spectrum is symmetric around $n/2$ and the Steenbrink spectrum is symmetric around $(n-2)/2$. For instance, in the curve case the Steenbrink spectrum is symmetric around 0.

By definition the (Saito) spectral polynomial is a polynomial in $s^{1/k}$ where $k$ is the order of the semi-simplification of $T$.
%
%One of the advantages of the spectral polynomial associated with the Saito spectrum rather than the Steenbrink spectrum happens for the Thom-Sebastiani Theorem: the Saito-spectral polynomial of the join of two singularities is the product of the spectral polynomials of the singularities..
\end{remark}

We will now focus on ordinary multiple points and (semi-)weighted homogeneous singularities.

\begin{definition} Let $f\in \C[x_1,\dots,x_n]$ be a polynomial such that $f(0)=0$.
A singularity $(f,0)$ is \emph{weighted homogeneous} if and only if it is right equivalent to a weighted homogeneous polynomial $(g,0)$.

A singularity $(f,0)$ is \emph{semi-weighted homogeneous} if the we can choose coordinates and weights such that lowest degree term in a graded decomposition of $f$ is a weighted homogeneous singularity.

An \emph{ordinary $m$-fold point} is an isolated singularity of multiplicity $m$ such that the tangent cone is a cone over a smooth hypersurface of degree $m$ in $\Ps^{n-1}$. 
\end{definition}

\begin{remark}
By a result of Saito  \cite{SaiMilTj} the singularity $(f,0)$ is weighted homogeneous if and only if $(f,0)$ is contact equivalent to a homogeneous polynomial if and only if the Milnor number of $f$ and the Tjurina number of $f$ coincide.

It is easy to construct examples of ordinary multiple points which have distinct Tjurina and Milnor numbers, hence which are not weighted homogeneous singularities.
\end{remark}
 
\begin{remark}
Let $f$ be a weighted homogeneous polynomial. The Milnor algebra of $f$ comes with a natural $\Q$-grading,  such that $f$ is homogeneous of degree 1.  For $\beta\in \Q$ we denote with $M(f)_{\beta}$ the graded part of degree $\beta$.
\end{remark}

\begin{proposition} Let $f$ be a weighted homogeneous polynomial in $x_1,\dots,x_n$ with rational weights $w_1,\dots,w_n$, chosen such that $\deg(f)=1$. Let $w=\sum_{i=1}^n w_i$. Then
\[ \spe(f)=\sum_{\alpha\in \Q_{>0}} \dim M(f)_{\alpha-w} s^{\alpha}.\]
\end{proposition}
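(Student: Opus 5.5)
This is the classical computation of the spectrum of a quasi-homogeneous singularity, due to Steenbrink and Varchenko. I would not derive it from first principles. Instead, I would reduce it to the known description of the Hodge filtration on the vanishing cohomology via the Brieskorn lattice. In that description, the Milnor fiber cohomology of a weighted homogeneous $f$ is identified with the space of forms $\Omega^n/df\wedge\Omega^{n-1}\cong M(f)\,dx_1\wedge\dots\wedge dx_n$.

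\textbf{Step 1.} Set $\omega=dx_1\wedge\dots\wedge dx_n$, which has weight $w$. Choose a monomial basis $\{x^{a}\}$ of $M(f)$, homogeneous for the $\Q$-grading. For a basis element of degree $\beta$, the form $x^a\omega$ has total weight $\ell(x^a\omega)=\beta+w$.

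\textbf{Step 2.} Compute the monodromy using the $\C^*$-action $x_i\mapsto e^{2\pi\sqrt{-1}tw_i}x_i$. This action preserves the Milnor fiber $F=f^{-1}(1)$, since $f$ has degree $1$, and at $t=1$ it realizes the geometric monodromy. The Gelfand--Leray residue $[x^a\omega/df]$ restricted to $F$ then spans a class in $H^{n-1}(F,\C)$. This class is an eigenvector of $T^*$ with eigenvalue $\exp(2\pi\sqrt{-1}\,\ell)$, where $\ell=\beta+w$. Since $M(f)$ has dimension $\mu$, these classes give a basis of $H^{n-1}(F,\C)$. In particular $T^*$ is semisimple, as is standard for quasi-homogeneous $f$.

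\textbf{Step 3.} Identify the Hodge level of each eigenclass. This is the main obstacle. By Steenbrink's theorem, the geometric section $s[x^a\omega]$ has order $\ell$ in the $V$-filtration. Its class therefore lies in $\Gr_F^{n-1-p}C_\alpha$, where $\ell=\alpha+p+1$, $-1<\alpha\le 0$ and $p=\lceil \ell\rceil -1$. Equivalently, the Hodge filtration is read off from the pole order of the form $x^a\omega/(f-1)^{k}$ on the compactification of $F$ in weighted projective space. I would simply cite this identification: Steenbrink's 1977 work, or \cite[Section II.8]{Kuli}, which the paper already references. Alternatively, one can use the Thom--Sebastiani reduction to Brieskorn--Pham polynomials together with invariance of the spectrum under $\mu$-constant deformation. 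This gives an independent check: for $x^k$ with weight $1/k$, $M(x^k)$ has degrees $j/k$ for $0\le j\le k-2$, and the known spectrum $\sum_{j=1}^{k-1}s^{j/k}$ is matched by $\alpha=j/k+1/k$.

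\textbf{Step 4.} Sum over the basis. Each monomial of degree $\beta=\alpha-w$ contributes $s^{\alpha}$ to $\spe(f)$, which gives
\[
\spe(f)=\sum_\alpha \dim M(f)_{\alpha-w}\,s^\alpha .
\]
All exponents are positive, since $\beta\ge 0$ and $w>0$. This agrees with the stated symmetry $s^n\spe(f)(1/s)=\spe(f)(s)$, because the socle of $M(f)$ has degree $\sum(1-2w_i)=n-2w$.
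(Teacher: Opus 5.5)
Your proposal is correct and follows essentially the paper's route. Both arguments reduce the statement to Steenbrink's identification of the Hodge pieces of the monodromy eigenspaces with graded pieces of the Milnor algebra, as in \cite[Section II.8]{Kuli}: your rule that a class of weight $\ell=\beta+w$ lies in $\Gr_F^{n-1-p}C_\alpha$ with $\ell=\alpha+p+1$ is exactly the paper's $\Gr_F^{p}C_\alpha=M(f)_{n-p+\alpha-w}$ after reindexing.

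Two small caveats. First, the $\C^*$-flow carries $f^{-1}(1)$ to $f^{-1}(e^{2\pi\sqrt{-1}t})$, so only its time-one map preserves $F$. Second, the Thom--Sebastiani/Brieskorn--Pham route can only serve as a sanity check, not an alternative proof: weights such as $(3/8,1/4)$ for $x^2y+y^4$ are not realized by any Brieskorn--Pham polynomial.
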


\begin{proof}
We follow \cite[Section II.8.4]{Kuli}. One can identify the graded parts of the Milnor algebra with the graded parts $\Gr_F^p C_{\alpha}$. More precisely, for $p\in \Z$ and $-1<\alpha \leq 0$ one has that $\Gr_F^p C_{\alpha}=M(f)_{n-p+\alpha-w}$
The spectral polynomial now simplifies to
\[  \sum_{-1<\alpha\leq 0}\sum_{p=0}^{n-1} \left(\dim_{\C} M(f)_{p+1+\alpha-w}\right) s^{1+\alpha+p} \in R.\]
\end{proof}

\begin{remark} As remarked before not all ordinary $m$-fold point are weighted homogeneoust. 
If $(f,0)$ is an ordinary multiple point of order $m$ which is also weighted homogeneous, then  $\dim M(f)_{\beta}$ equals the coefficient of $t^{\beta m}$ in
\[ \frac{(1-t^m)^n}{(1-t)^n}.\]
At the same time, the Milnor algebra is the quotient of $\C[x_1,\dots,x_n]$ by a complete intersection ideal with all generators of  degree $m-1$ in $x_1,\dots,x_n$. The Koszul complex on these $n$ generators is a minimal resolution. Hence if $\beta m$ is an integer between $0$ and $n(m-1)$, then 
\[ \dim M(f)_{\beta}=\sum_{j=0}^{\lfloor \frac{\beta m}{m-1} \rfloor} (-1)^j \binom{n}{j} \binom{ \beta m -j(m-1)+(n-1)}{n-1}.\]
Lateron we explain that the spectrum of an ordinary multiple point of order $m$ is independent of whether the singularity is  weighted homogeneous or not. (Remark~\ref{RmkSemCon}.)
\end{remark}

\begin{example}
Let $f=\sum_{i=1}^n x_i^2$ be the defining polynomial of a node. Then the spectral polynomial equals $s^{n/2}$.

Let  $f=\sum_{i=1}^3 x_i^3$. Then $f$ is a 3-fold point on a surface, and is homogeneus. The spectral polynomial is $s+3s^{4/3}+3s^{5/3}+s^2$.  For $f=\sum_{i=1}^3 x_i^4$
we have  the spectral polynomial $s^{3/4}+3s+6 s^{5/4}+ 7 s^{3/2}+6 s^{7/4}+3s^2+s^{9/4}$.
\end{example}

We continue by  studying the behaviour of the spectrum in families.  

\begin{definition}
Let $p(s)\in R$ with $p(s) =\sum_{\alpha\in \Q} n_\alpha s^\alpha$. For a subset $I\subset \R$ we define the \emph{$I$-spectral degree} of $p$, as $\sum_{\alpha \in I \cap \Q} n_{\alpha}$ and  denote this by $\deg_I(p(s))$.

Let $\Delta\subset \C$ be an open disc containing $0$.
 Let $f_t, (t\in \Delta)$ be a family of  hypersurface singularities. Suppose that $\spe(f_t)$ is independent of $t$ for $t\neq 0$. We say that a set $S$ is a \emph{semicontinuity set} if $\deg_S \spe(f_0)\geq \deg_S \spe(f_t)$ for all $t\in \Delta$.
 If a semicontinuity  set $S$ is an interval then we use the notion \emph{semicontinuity interval}.
 \end{definition}

\begin{notation}
For $\alpha \in \R$ let  $I(\alpha)=\{x\in \R \mid \alpha <x\leq \alpha+1\}$ be the half-open interval of length 1, starting at $\alpha$.

If $f$ has an isolated hypersurface singularity then set   
\[ \delta(f):= \degsp_{I(\frac{n-2}{2})}(f).\]
\end{notation}
 
 \begin{remark}\label{RmkSemCon}
 Varchenko \cite{VarSC} proved that for every $\alpha\in \R$ the interval $(\alpha,\alpha+1)$ is a semicontinuity interval if the singularity is weighted homogeneous, Steenbrink \cite{SteSC} showed that for \emph{every} isolated singularity the half-open interval $I(\alpha)$ is a semicontinuity interval.
 
 From Steenbrink's result it follows that the spectrum is constant for  any $\mu$-constant deformation of a singularity. In particular, a semi-weighted homogeneous singularity has the same spectrum as the associated weighted homogeneous singularity.
 
 An ordinary $m$-fold point does not need to be semi-weighted homogeneous, but it always has a $\mu$-constant deformation to an ordinary $m$-fold point. In particular, any two ordinary $m$-fold points on an $n-1$-dimensional hypersurface have the same spectrum.
 \end{remark}

Steenbrink's result implies now the following upper bound for $\delta_X $.

\begin{proposition}\label{prpSpectral} Let $(X_t)_{t\in \Delta}\subset \Ps^n$ be a family of hypersurfaces of degree $d$ with isolated singularities. Suppose that $X_t$ is nodal for $t\neq 0$ and has precisely $\delta$ nodes  and $X_0$ has  isolated singularities at  $p_1,\dots p_k$ with local equations $f_{p_i}=0$ and no further singularities. Then
\[ \delta \leq \sum_{i=1}^k \delta(f_{p_i}).\]
\end{proposition}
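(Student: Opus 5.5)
The plan is to localize the problem at each singular point of $X_0$ and apply Steenbrink's semicontinuity (Remark~\ref{RmkSemCon}) to the half-open interval $I(\frac{n-2}{2})=(\frac{n-2}{2},\frac{n}{2}]$.

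First, for each $i$ choose a small closed ball $B_i$ around $p_i$ in an affine chart. Take these balls pairwise disjoint and each a Milnor ball for $f_{p_i}$. Since $X_0$ is smooth outside $\{p_1,\dots,p_k\}$, and singular points of a family of hypersurfaces vary upper semicontinuously, the following holds after shrinking $\Delta$: for $t$ close to $0$, every singular point of $X_t$ lies in $\bigcup_i \operatorname{int}(B_i)$. Hence the $\delta$ nodes of $X_t$ are distributed among the balls, say $\delta_i$ of them in $B_i$, with $\sum_i \delta_i=\delta$. It therefore suffices to show $\delta_i\leq \delta(f_{p_i})$ for each $i$.

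Second, fix $i$. The local equations $f_t$ of $X_t$ in $B_i$ form a deformation of $f_{p_i}=f_0$. Steenbrink's theorem is stated for a family of isolated singularities, but its standard form (as in \cite{SteSC}, see also \cite{DucoNotes}) applies to a deformation of an isolated singularity $f_0$. It compares the spectrum of $f_0$ with the sum of the spectra of all singular points of $f_t$ inside the Milnor ball, since the vanishing cohomology of $f_0$ restricted to $B_i$ splits as a direct sum of the local vanishing cohomologies at the critical points of $f_t$ in the fibre. So I will invoke the semicontinuity statement in this multi-point form:
\[ \deg_{I(\alpha)} \spe(f_0)\;\geq\; \sum_{q\in \sing(X_t)\cap B_i} \deg_{I(\alpha)}\spe(f_{t,q}). \]
I expect this step to be the main obstacle, namely justifying that Remark~\ref{RmkSemCon} gives the several-singular-points version. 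If the remark is read only for a single singularity, I would reduce to it by appealing to the original formulation in \cite{SteSC}, which is precisely about the splitting of a singularity into several ones.

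Third, I evaluate both sides with $\alpha=\frac{n-2}{2}$. By the node example, each node has spectral polynomial $s^{n/2}$, and $n/2\in(\frac{n-2}{2},\frac{n}{2}]$. So each node contributes exactly $1$ to the right-hand side, and the right-hand side equals $\delta_i$. The left-hand side is $\delta(f_{p_i})$ by definition. Thus $\delta_i\leq\delta(f_{p_i})$, and summing over $i$ gives $\delta\leq\sum_i\delta(f_{p_i})$.

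Finally, the hypothesis that $X_t$ is nodal for all $t\neq0$, with the spectrum of the local fibres constant in $t$, matches the constancy assumption in the definition of a semicontinuity set. Restricting to a small punctured disc where the number of nodes in each $B_i$ is constant is enough for this.
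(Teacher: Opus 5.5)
Your proposal is correct and follows the paper's argument. The paper applies Steenbrink's semicontinuity on the half-open interval $I(\frac{n-2}{2})$ and uses that a node has spectrum $s^{n/2}$; your localization to disjoint Milnor balls and your use of the multi-point form, summing over $\sing(X_t)\cap B_i$ (the critical points on the zero fibre), just make explicit what the paper's one-line proof leaves implicit.

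One small caution: your heuristic that the vanishing cohomology of $f_0$ ``splits as a direct sum'' of the local ones at the points of $\sing(X_t)\cap B_i$ is not literally right. In general $\mu(f_0)$ also counts critical points of $f_t$ off the zero fibre, and the Hodge-theoretic content is exactly what Steenbrink's theorem supplies. Since you invoke \cite{SteSC} rather than this heuristic, nothing breaks.
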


\begin{proof} The spectrum of a node consists only of  the number $\frac{n}{2}$. By semicontinuity  \cite{SteSC} we have that 
\[  \sum_{i=1}^k \delta(f_{p_i})= \sum_{i=1}^k  \degsp_{I(\frac{n-2}{2})}(f_{p_i})\geq \delta.\]
\end{proof}

Let $f$ be an $m$-fold point, then $\degsp_{I((n-2)/2)}(f)$ equals
\[ \# \left\{ (k_1,k_2,\dots,k_n) \in (0,m)^n \cap \Z^n \mid \frac{1}{2}nm-m<\sum k_i \leq \frac{1}{2}mn \right\}. \]

The Arnol'd number
\[ A_n(m):=  \# \left\{ (k_1,k_2,\dots,k_n) \in (0,m)^n \cap \Z^n \mid \frac{1}{2}nm-m+1<\sum k_i \leq \frac{1}{2}mn \right\} \]
is smaller than $\degsp_{I((n-2)/2)}(f)$. Varchenko \cite{VarSC} showed that the Arnol'd number is an upper  bound for the number of nodes on a hypersurface of degree $d$. In Varchenko's case one uses the co-called Bruce deformation see \cite[Section 5.4]{DucoNotes}, which is of low weight. In our case we cannot assume that the deformation is such that we can apply Varchenko's result.

\begin{corollary}\label{corSpec}
Let $X\subset \Ps^n$ be a hypersurface with an ordinary $m$-fold point and no other singularities. Suppose $X$ lies in the closure of the Severi variety of degree $d$ hypersurface with $\delta$ nodes. Then $d\geq m$ and 
\[ \delta \leq  \# \left\{ (k_1,k_2,\dots,k_n) \in (0,m)^n \cap \Z^n \mid \frac{1}{2}nm-m<\sum k_i \leq \frac{1}{2}nm \right\}. \]
In particular, if $n=3$ then 
\[ \delta\leq \left\{
\begin{array}{ll}
\frac{23}{48}m^3-m^2+\frac{7}{12} m &\mbox{ if } m\equiv 0\bmod 2\vspace{0.2cm}\\
\frac{23}{48}m^3-\frac{21}{16}m^2+\frac{49}{48}m-\frac{3}{16} &\mbox{ if } m\equiv 1 \bmod 2.\end{array}\right.\]
\end{corollary}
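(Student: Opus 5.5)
The plan has three parts: show $d\geq m$, bound $\delta$ by the spectral count, and then evaluate that count for $n=3$.

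\textbf{The inequality $d\geq m$.} Since $X$ has a point of multiplicity $m$, its degree is at least $m$. The only exception would be $X$ being a cone containing a linear component, and that is impossible for an ordinary point of a degree-$d$ hypersurface, so this part is immediate.

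\textbf{The spectral bound.} Because $X$ lies in the closure of $V^n_{d,\delta}$, there is a one-parameter family $(X_t)_{t\in\Delta}$ with $X_0=X$ and $X_t$ nodal with $\delta$ nodes for $t\neq 0$. We may take it to be a curve through $X$ in the closure. Proposition~\ref{prpSpectral} then gives $\delta\leq \delta(f)$, where $f$ is a local equation of the $m$-fold point. The hypothesis that $X_t$ has isolated singularities holds because nodal hypersurfaces have isolated singularities. By Remark~\ref{RmkSemCon}, $\spe(f)$ equals the spectrum of the homogeneous point $\sum x_i^m$.

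For that point I would use the Proposition on weighted homogeneous polynomials, with weights $w_i=1/m$. The Milnor algebra has monomial basis $x^{a}$ with $0\leq a_i\leq m-2$. The basis monomial $x^a$ gives the spectral number $\sum (a_i+1)/m=\sum k_i/m$, where $k_i=a_i+1\in(0,m)$. Counting spectral numbers in $I(\frac{n-2}{2})=(\frac n2-1,\frac n2]$ amounts to counting tuples with $\frac{nm}{2}-m<\sum k_i\leq \frac{nm}{2}$. This is exactly the displayed count.

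\textbf{The case $n=3$.} Here we must count triples $k\in\{1,\dots,m-1\}^3$ with $\frac{3m}{2}-m<\sum k_i\leq\frac{3m}{2}$. Set $N(s)=\#\{k:\sum k_i=s\}$, which is the coefficient of $t^s$ in $(t+\dots+t^{m-1})^3$. By inclusion–exclusion,
\[
N(s)=\sum_{j}(-1)^j\binom{3}{j}\binom{s-1-j(m-1)}{2}.
\]
In the relevant range $m/2<s\leq 3m/2$, only the terms $j=0$ and $j=1$ contribute, since $s-1-2(m-1)<2$ there. The sum of $N(s)$ over the window then splits by the parity of $m$. For $m$ even the window is $s=m/2+1,\dots,3m/2$. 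For $m$ odd it is $s=(m+1)/2,\dots,(3m-1)/2$. In each case the sum of the quadratic polynomials is evaluated with standard formulas.

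The main obstacle is this final piece of bookkeeping. The $j=1$ correction term only switches on for $s\geq m+2$, so each sum has to be split at $s=m+1$. I would then check the resulting cubic against small values, for example $m=2$ gives $1$ and $m=3$ gives $7$, to confirm the stated polynomials.
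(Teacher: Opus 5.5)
Your argument is the paper's own: apply Proposition~\ref{prpSpectral} to a one-parameter degeneration, and replace the $m$-fold point by $\sum x_i^m$ via Remark~\ref{RmkSemCon}. Then read off the spectral numbers $\sum k_i/m$ from the monomial basis of the Milnor algebra and count those lying in $(\frac n2-1,\frac n2]$. Your inclusion--exclusion for $n=3$ spells out the ``straightforward computation'' the paper omits, and it does yield the stated cubics.

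There is one concrete error: your sanity check at $m=3$ is wrong. With $k_i\in\{1,2\}$, the window $\frac32<\sum k_i\le\frac92$ admits only the sums $3$ and $4$, so the count is $1+3=4$, not $7$. The value $4$ is what the odd-case polynomial gives at $m=3$, and it matches the bound $\delta_S\le 4$ for triple points quoted later in the paper. If you tested against $7$, you would wrongly conclude that the stated formula fails.

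Carrying out your split at $s=m+1$ with the hockey-stick identity gives
\[
\binom{3r}{3}-\binom{r}{3}-3\binom{r+1}{3}=\frac{23r^3-24r^2+7r}{6}\quad (m=2r),\qquad \binom{3r+1}{3}-\binom{r}{3}-3\binom{r+1}{3}=\frac{23r^3+3r^2-2r}{6}\quad (m=2r+1).
\]
These are exactly the two displayed polynomials, with values $1,4,17,32$ for $m=2,3,4,5$.

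Finally, the ``exception'' in your $d\geq m$ paragraph does not exist. The multiplicity of a point on a degree-$d$ hypersurface is always at most $d$, because the local affine equation has degree at most $d$ and its lowest-order term has degree $m$.
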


\begin{proof}
Let $f=\sum_{i=1}^n x_i^m$. Then the Jacbian ideal of $f$ is generated by $x_i^{m-1}$, where $t=1,\dots n$. Therefore  $\C$-vector space $x_1x_2\dots x_n M(f)$ is spanned by monomials of the form $\prod_{i=1}^n x_i^{k_i}$ with $0<k_i<m$ for $I=1,\dots n$.
The spectral degree of $\prod x_i^{k_i}$ equals $\frac{1}{m}\sum k_i$. By the previous proposition $\delta$ is bounded by the number of such monomials where the spectral degree is between $\frac{1}{2}n-1+\frac{1}{n}$ and $\frac{1}{2}n$. A straightforward computation gives the values for $n=3$.
\end{proof}

\begin{remark} This proposition shows that if $S$ is a surface with ordinary multiple point and no further singularities then  $\delta_S\leq 4$ for a triple point, $\delta_S\leq 17$ for a quadruple point and $\delta_S\leq 32$ for a quintuple point. 
One the other hand, Ciliberto and Galati showed  \cite{CiGa26} that $\delta_S=4$, $\delta_S\geq 16, \delta_S\geq 31$ for triple, quadruple and quintuple points respectively.

Moreover, the Varchenko bounds for degree $4$ and $5$ surfaces in $\Ps^3$ are $16$ and $31$ respectively. Therefore, if  there exists an example of a surface with a 4-fold point and $\delta=17$ or a surface with a 5-fold point and  $\delta=32$ then the corresponding  deformation cannot be of low weight, or the singularity cannot be weighted homogeneous. For a similar reason we cannot apply \cite[Theorem 3.3]{CiGa26} to construct examples since this requires a degree $m$ surface with $\delta$ nodes, which is then bounded by the Varchenko bound.
\end{remark}

\begin{remark} We call an $m$-fold point on a surface a \emph{quasi-ordinary} $m$-fold point  if the exceptional divisor of the blow-up of the singularity is a nodal curve, and if one blow-up suffices to resolve the singularity. 
Consider the following polynomials
\[ x^3+y^3+z^3+f_4, (x^3+x^2z+y^2)+f_4, z(x^2+y^2+z^2)+f_4, xyz+f_4\]
where $f_4$ is a generic homogeneous polynomial of degree 4. Then these are ordinary or quasi ordinary 3-fold points.

These singularities have Milnor numbers 8,9,10,11 respectively. One easily checks that the value of $\delta(f)$ for there singularities is $4,5,6,7$ respectively.
Ciliberto--Galati \cite{CiGa26} constructed for every $d\geq 4$ a family of 7-nodal hypersurfaces degenerating in a surface with a single triple point, with tangent cone $xyz=0$ and such that one blow-up suffices to resolve this singularity. 
Since $\delta(f)=7$ for such singularities  it is not possible to have a 8-nodal surface degenerating in such a singularity, hence their construction is optimal.
\end{remark}

\section{Lower bound for $\delta_S$}\label{secLower}
Let $S\subset \Ps^3$ be a surface of degree $d$ with a unique singular point, which is an ordinary $m$-fold point.

In \cite{CiGa26} it is shown that $\delta_S\geq \binom{m-1}{2}=\frac{m^2-m}{2}$. In the previous section we presented an upper bound which is cubic in $m$. 
In this section we show that  $\delta_S\geq \frac{m^3}{27}$.
%If one could extend the result \cite[Theorem 3.3]{CiGa26} to higher dimension, then this would yield a lower $\delta_X\geq  \frac{m^n}{n^n}$, for a hypersurface $X\subset \Ps^n$ of degree $d\geq m$ containing an ordinary $m$-fold point and no further singularities.

We start by a recalling a result from commutative algebra:

\begin{proposition} Let $\Sigma\subset \Ps^n$ be a zero-dimensional scheme, let $I=I(\Sigma)\subset \C[x_0,\dots,x_n]$ be the ideal of $\Sigma$ and let 
\[ 0 \to \oplus_{j=1}^{i_t} S(-a_{tj}) \to \dots \to \oplus_{j=0}^{i_1}S(-a_{1j}) \to S \to S/I \to 0\]
be a free graded resolution of $S/I$, where $S=\C[x_0,\dots, x_n]$ and the $a_{ij}$ are positive integers.
Then $H^1(\Ps^n,\cO(d)\otimes \cI_{\Sigma})=0$ for all $d\geq \max(a_{ij})-n$.
\end{proposition}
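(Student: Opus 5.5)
We prove the statement by splitting the free resolution into short exact sequences and tracking cohomological degrees of sheaves on $\Ps^n$. Sheafify the resolution. Since $\Sigma$ is zero-dimensional, $S/I$ is Cohen--Macaulay of dimension $1$, so Auslander--Buchsbaum gives $t=n$. We then get an exact sequence of sheaves
\[ 0\to \mathcal{F}_n\to \dots\to \mathcal{F}_1\to \cI_\Sigma\to 0,\qquad \mathcal{F}_i=\oplus_j \cO_{\Ps^n}(-a_{ij}). \]
Twist by $\cO(d)$ and break this into short exact sequences
\[ 0\to K_{i+1}\to \mathcal{F}_i(d)\to K_i\to 0, \]
with $K_1=\cI_\Sigma(d)$ and $K_n=\mathcal{F}_n(d)$.

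Chasing the long exact sequences in cohomology gives
\[ H^1(K_1)\hookleftarrow\dots, \]
more precisely
\[ H^1(\mathcal{F}_1(d))\to H^1(K_1)\to H^2(K_2)\to\cdots, \]
so by induction $H^1(K_1)$ injects into $H^i(K_i)$ whenever the relevant $H^{i-1}(\mathcal{F}_{i-1}(d))$ vanish. On $\Ps^n$, line bundles have no intermediate cohomology, i.e. $H^i(\cO(k))=0$ for $0<i<n$. Hence for $i<n$ each connecting map $H^{i}(K_{i})\to H^{i+1}(K_{i+1})$ is injective, because $H^i(\mathcal{F}_i(d))=0$ for $1\le i\le n-1$. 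Iterating yields
\[ H^1(\cI_\Sigma(d))\hookrightarrow H^{n-1}(K_{n-1}). \]
The last sequence $0\to\mathcal{F}_n(d)\to\mathcal{F}_{n-1}(d)\to K_{n-1}\to 0$ then gives
\[ H^{n-1}(K_{n-1})\hookrightarrow H^n(\mathcal{F}_n(d)), \]
since $H^{n-1}(\mathcal{F}_{n-1}(d))=0$ for $n\ge 2$.

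It remains to find when $H^n(\mathcal{F}_n(d))=\oplus_j H^n(\cO(d-a_{nj}))$ vanishes. This happens exactly when $d-a_{nj}\geq -n$ for all $j$, i.e. $d\geq a_{nj}-n$, and this is implied by $d\ge \max a_{ij}-n$. Thus $H^1(\cI_\Sigma(d))=0$.

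The main point to check is the length of the resolution, or in fact that the argument is robust if $t\ne n$. If $t<n$, the chain ends earlier at some $H^t(\mathcal{F}_t(d))$ with $t<n$, which vanishes automatically. If the given resolution is not minimal and $t>n$, we replace it by the minimal one, whose twists are among the $a_{ij}$ (nonminimal resolutions contain the minimal one as a direct summand), so the bound $\max a_{ij}$ only improves. The case $n=1$ is trivial since then $\cI_\Sigma$ is a line bundle. Note that only the last syzygy degrees actually matter, so the stated bound with $\max a_{ij}$ is weaker than what the argument proves.
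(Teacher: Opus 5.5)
Your proof is correct, but it follows a different route from the paper's.

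\textbf{The paper's route.} The paper never splits the resolution. It uses $0\to\cI_\Sigma(d)\to\cO(d)\to\cO_\Sigma\to 0$ together with $H^1(\cO(d))=0$ to get an exact criterion: $H^1(\cI_\Sigma(d))=0$ if and only if $\dim(S/I)_d=\length\Sigma$. In other words, the Hilbert function of $S/I$ must agree with its constant Hilbert polynomial at $d$. It then writes $\dim(S/I)_d=\sum_{i,j}(-1)^i\dim S_{d-a_{ij}}$. Since $\dim S_k=\binom{k+n}{n}$ is given by a polynomial for all $k\geq -n$, the two agree once $d-a_{ij}\geq -n$ for every $i,j$. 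This is more elementary. It needs only the vanishing of $H^1(\cO(d))$ and additivity of Hilbert functions, and nothing about the length of the resolution. However, it inherently involves all the shifts.

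\textbf{Your route.} Your cohomology chase uses the full cohomology of line bundles on $\Ps^n$, including the top degree. It also requires controlling the length of the resolution, which you do via Auslander--Buchsbaum and reduction to the minimal resolution. Even that step is unnecessary: if $t>n$, the chain still ends in $H^n(K_n)$, which is a quotient of $H^n(\mathcal{F}_n(d))$ because $H^{n+1}(K_{n+1})=0$. In return you get the injection $H^1(\cI_\Sigma(d))\hookrightarrow H^n(\mathcal{F}_n(d))$ for every $d$. This gives the sharper sufficient condition $d\geq\max_j a_{nj}-n$, which depends only on the last syzygies.

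\textbf{In the paper's application.} For the Koszul complex used in Proposition~\ref{prpHilbBound}, the maximal shift $\sum d_i$ already occurs in the last term. So there the two bounds coincide and nothing is lost by the paper's cruder version.
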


\begin{proof} Since $H^i(\Ps^n,\cO(d))=0$ for all $i,d>0$ and $H^0(\Ps^n,\cO\times \cI_{\Sigma})=I_d$ we obtain that
\[ 0 \to H^0(\cO(d)\otimes \cI_{\Sigma}) \to H^0(\cO(d))) \to H^0(\cO_{\Sigma}) \to H^1(\cO(d)\otimes \cI_{\Sigma}) \to 0 \] is exact.  
Hence  $H^1(\Ps^n,\cO(d)\otimes \cI_{\Sigma})=0$ if and only if $h_I(d)=\length \Sigma$. i.e., the Hilbert function and Hilbert polynomial of $I$ agree at $d$.

One has that $\dim S_k=\frac{(k+n)(k+n-1)\dots (k+1)}{n!}$ for $j\geq -n$. Hence the Hilbert polynomial and Hilbert function of $\Ps^n$ agree for $k\geq -n$. From this it follows directly that the Hilbert polynomial and Hilbert function of $S/I$ agree for all $d$ at least $\max(a_{ij})-n$.
\end{proof}

\begin{proposition}\label{prpHilbBound} Let $\Sigma\subset \Ps^n$ be a zero-dimensional scheme, which is a scheme-theoretic complete  intersection of multidegree $(d_1,\dots,d_n)$. 
Then $H^1(\Ps^n,\cO(d)\otimes \cI_{\Sigma})=0$ for all $d\geq (\sum_{i=1}^n d_i)-n$.
\end{proposition}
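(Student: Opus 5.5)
The plan is to apply the preceding proposition to the Koszul resolution of the complete intersection. Since $\Sigma$ is a scheme-theoretic complete intersection of multidegree $(d_1,\dots,d_n)$, its homogeneous ideal (taking the saturated one) is generated by a regular sequence $F_1,\dots,F_n$ with $\deg F_i=d_i$. I would first note that it does no harm to work with this ideal rather than the saturated ideal $I(\Sigma)$. The two agree in all large degrees, and the ideal $(F_1,\dots,F_n)$ is in fact saturated. To see this, observe that $S/(F_1,\dots,F_n)$ is Cohen--Macaulay of dimension $1$, because a complete intersection is Cohen--Macaulay. Hence the irrelevant ideal is not an associated prime, so the ideal is saturated and equals $I(\Sigma)$.

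Next I would write down the Koszul complex on $F_1,\dots,F_n$. Because the sequence is regular, this complex is exact and gives the free graded resolution
\[ 0\to S\Bigl(-\sum_{i=1}^n d_i\Bigr)\to \cdots \to \bigoplus_{|J|=k} S\Bigl(-\sum_{i\in J} d_i\Bigr)\to\cdots\to \bigoplus_{i=1}^n S(-d_i)\to S\to S/I\to 0. \]
The twists occurring are the numbers $a=\sum_{i\in J}d_i$ for nonempty subsets $J\subset\{1,\dots,n\}$. All of them are positive, and the largest is attained at $J=\{1,\dots,n\}$, giving $\max(a_{ij})=\sum_{i=1}^n d_i$.

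Plugging this into the previous proposition yields $H^1(\Ps^n,\cO(d)\otimes\cI_\Sigma)=0$ for all $d\geq \sum d_i - n$, which is the claim.

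The only step needing care is the identification of the saturated ideal of $\Sigma$ with $(F_1,\dots,F_n)$, that is, the meaning of "scheme-theoretic complete intersection". If the hypothesis only says that $\Sigma$ is cut out scheme-theoretically by $n$ hypersurfaces, I would argue as follows. The $F_i$ meet in a zero-dimensional scheme in $\Ps^n$, so they form a regular sequence in $S$, because the height of $(F_1,\dots,F_n)$ is $n$ and $S$ is Cohen--Macaulay. The Cohen--Macaulay argument above then shows the ideal is saturated, hence equal to $I(\Sigma)$.

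As an alternative that avoids saturation entirely, one could sheafify the Koszul complex. Splitting it into short exact sequences and chasing cohomology, the group $H^1(\cI_\Sigma(d))$ is controlled by $H^n(\cO(d-\sum d_i))$. That group vanishes once $d-\sum d_i>-n-1$, which gives the same bound.
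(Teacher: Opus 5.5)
Your proposal is correct and follows the paper's proof. Both resolve $S/I$ by the Koszul complex on $F_1,\dots,F_n$, note that the largest twist is $\sum_{i=1}^n d_i$, and apply the preceding proposition; your Cohen--Macaulay argument that $(F_1,\dots,F_n)$ is saturated (hence equal to $I(\Sigma)$, as that proposition requires) supplies a point the paper leaves implicit, and your sheafified-Koszul alternative reducing to $H^n(\Ps^n,\cO(d-\sum_{i} d_i))=0$ is also valid.
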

\begin{proof}
Let $F_1,\dots,F_n$ be  homogeneous generators of $I$. Then the Koszul complex on $F_1,\dots,F_n$ is a resolution of $S/I$. In particular, for this resolution one has that $a_{ij}\leq \sum_{k=1}^n d_k$ for all $i$ and $j$. The result now follows from the previous result.
\end{proof}

\begin{theorem}\label{thmLowerSur} Let $n,d\geq 3$ be integers.  Moreover, suppose $(n,d)\neq (3,3)$. 

There exists a nodal hypersurface $X\subset\Ps^{n}$ of degree $d$ with precisely $(\lfloor \frac{d}{n} \rfloor+1)^{n}$ nodes such that $H^1(I_{\Sigma} (d))=0$, where $\Sigma=X_{\sing}$. If, moreover, $n$ does not divide $d$, then  $H^1(I_{\Sigma} (d-1))=0$.

If $n$  divides $d$ then there exists a nodal hypersurface $X\subset\Ps^{n}$ of degree $d$ with precisely $\frac{d}{n} (\frac{d}{n}+1)^{n-1}$ nodes such that $H^1(I_{\Sigma} (d-1))=H^1(I_{\Sigma} (d))=0$. 
\end{theorem}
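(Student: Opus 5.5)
We will use a Chebyshev-type construction, so that the nodes form a complete intersection set. Write $d=nk+r$ with $0\le r<n$. Fix polynomials $P_1,\dots,P_n$ in one variable of degrees $a_1,\dots,a_n$ with $\sum a_i=d$. Choose each $P_i$ with only simple roots and so that $P_i'$ has $a_i-1$ distinct roots, at which $P_i$ takes one and the same critical value $c_i\neq 0$. Chebyshev polynomials, suitably rescaled, do this in even degree. In odd degree we adjust, or simply use a constant shift so that all critical values are $\pm c$.

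We then consider, in affine coordinates $y_1,\dots,y_n$, the hypersurface
\[ G=\prod_{i=1}^n P_i(y_i) - \epsilon, \]
or better a sum-type equation $\sum_i Q_i(y_i)=0$ whose singular points are the products of critical points. Homogenising gives degree $d$.

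The cleanest choice is to take
\[ F=\sum_{i=1}^n \lambda_i\,\tilde T_{a_i}(y_i)\cdots \]
This does not quite have degree $d$ in each variable. Instead I would take the homogeneous form $F=\sum_{i=1}^n x_0^{d-a_i}T_{a_i}(x_i/x_0)$, which is valid only if $d\geq a_i$. The affine singular locus is then $\{T_{a_i}'(y_i)=0 \ \forall i\}$, a product grid of $\prod_i(a_i-1)$ points. Such a point is singular exactly when $\sum_i T_{a_i}(y_i)=0$. Since the critical values of a Chebyshev polynomial are $\pm1$, we choose coefficients $\lambda_i$ and signs so that the points with value $0$ form a subgrid that is itself a complete intersection. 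This parity bookkeeping is fiddly.

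Rather than pursue it, I would follow a more direct route: a product of hyperplane arrangements. Take $F=\prod_{i} L_i + \epsilon\, G$, where the nodes are placed at a grid $\Sigma=\{(y_1,\dots,y_n): y_i\in A_i\}$ with $|A_i|=a_i+1$. We then want a degree-$d$ hypersurface singular at $\Sigma$ and nodal, and we arrange $\sum_i (a_i+1)\le d+n$ so that the $H^1$-vanishing holds.

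Concretely, $\Sigma$ is the complete intersection of the forms $\prod_{\alpha\in A_i}(x_i-\alpha x_0)$, of degrees $b_i=a_i+1$. By Proposition~\ref{prpHilbBound}, $H^1(I_\Sigma(e))=0$ for $e\ge \sum b_i-n$. With $b_i\in\{k,k+1\}$ and $\sum b_i\le d+n-1$ we get vanishing at both $d-1$ and $d$.

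Choosing $b_i=k+1$ for all $i$ gives $(k+1)^n$ points and requires $n(k+1)-n=nk\le d-1$, which holds iff $n\nmid d$. If $n\mid d$, the choice $b_i=k+1$ still gives vanishing at $d$. Taking $b_1=k$ and $b_i=k+1$ for $i\geq 2$ gives $k(k+1)^{n-1}$ points with vanishing at $d-1$ as well. These are exactly the counts in Theorem~\ref{thmLowerSur}.

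The main obstacle is to exhibit a degree-$d$ hypersurface that is singular exactly at $\Sigma$, with ordinary nodes there and no other singularities. For this I would use a Chebyshev-type sum
\[ F=\sum_{i=1}^n x_0^{d-b_i}\,g_i(x_i,x_0), \]
where each $g_i$ is a polynomial of degree $b_i$ whose critical points are the points of $A_i$. Equivalently, we take $g_i$ of degree $b_i+1\le d$ with $g_i'$ vanishing on $A_i$ and all its critical values equal, after suitable normalisation, so that $\sum_i g_i=0$ at every grid point. The easiest way is to take $g_i=T_{b_i+1}$ with $A_i$ the critical points of $T_{b_i+1}$, and to give the critical values alternating signs. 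If the alternating signs obstruct this, we pass to the subset where the signs cancel.

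Each critical point is nondegenerate, so each singular point of $F$ is an ordinary node, being a sum of nondegenerate quadratics. The point at infinity and extra singularities are excluded by the top-degree part of $F$, namely a generic sum of $x_i^{d}$-type terms after a small generic perturbation by elements of $I_\Sigma^2$ of degree $d$. Bertini applied to the linear system $|I_\Sigma^{(2)}(d)|$ would clean up singularities off $\Sigma$, and I expect it is exactly here that $(n,d)\ne(3,3)$ is needed. Verifying this genericity, together with matching the degrees $b_i+1\le d$, is the step I expect to require the most care.
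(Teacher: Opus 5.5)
Your choice of $\Sigma$ as a grid complete intersection of forms of degrees $b_i\in\{k,k+1\}$ is fine, and so is the $H^1$ bookkeeping via Proposition~\ref{prpHilbBound}. It coincides with the paper's argument; the paper uses general forms of the same degrees, but any reduced complete intersection would do. The node counts and the $n\mid d$ adjustment also match.

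\textbf{The gap.} The gap is the step you flag yourself: producing a nodal degree-$d$ hypersurface that is singular exactly at $\Sigma$.

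\begin{itemize}
\item A sum-type equation $\sum_i\lambda_i g_i(y_i)$ is singular at a grid point $(\alpha_1,\dots,\alpha_n)$ only if $\sum_i\lambda_i g_i(\alpha_i)=0$.
\item For this to hold on all of $A_1\times\dots\times A_n$ with $|A_i|\geq 2$, each $g_i$ must take the same value $c$ at all its critical points.
\item That is impossible. If $\deg g_i=b_i+1\geq 3$ and its $b_i$ distinct critical points all have critical value $c$, then $g_i-c$ has $b_i$ double roots, forcing $2b_i\leq b_i+1$.
\item The Chebyshev alternating-sign trick therefore only gives vanishing on a proper subset of the grid. Passing to that subset destroys both the count $\prod b_i$ and the complete-intersection structure on which your $H^1$-vanishing rests.
\item A perturbation $F+\epsilon G$ with $G\in I_\Sigma^2$ cannot repair this. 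Such $G$ does not change the value or the gradient of $F$ at points of $\Sigma$, so it cannot create the missing singularities.
\end{itemize}

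\textbf{The fix.} No special base polynomial is needed: a general member of $(I^2)_d$ already does everything. You gesture at this at the very end, and it is exactly the paper's proof.

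\begin{itemize}
\item Let $I=(F_1,\dots,F_n)$ with $F_i$ your grid forms or general forms of degree $b_i$. Take $F=\sum_{i\leq j}G_{ij}F_iF_j$ with $G_{ij}$ general of degree $d-b_i-b_j$.
\item This requires $2(k+1)\leq d$. That condition fails precisely for $(n,d)=(3,3)$, where $(I^2)_3=0$; this is the real role of the hypothesis $(n,d)\neq(3,3)$.
\item Given this, every $F_i^2$ times every monomial of degree $d-2b_i$ lies in $(I^2)_d$. So the base locus of $(I^2)_d$ is $V(I)=\Sigma$, and Bertini gives smoothness off $\Sigma$.
\item At each $p\in\Sigma$ the dehomogenized $F_i$ are local coordinates. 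In these coordinates the Hessian of $F$ at $p$ is the symmetric matrix with entries $G_{ij}(p)$ off the diagonal and $2G_{ii}(p)$ on it.
\item For general $G_{ij}$ this matrix is invertible at each of the finitely many points of $\Sigma$. Hence every point of $\Sigma$ is a node.
\end{itemize}
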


\begin{proof}
Fix $n$ general forms $F_1,\dots,F_n$ in $x_0,\dots,x_n$ of degree $\lfloor \frac{d}{n} \rfloor+1$. Let $I=(F_1,\dots,F_{n})$. Then $I$ is a complete intersection ideal and its vanishing locus in $\Ps^n$ is finite.
Consider the ideal  $I^2=(F_iF_j)_{1\leq i\leq j\leq n}$. Pick a general 
\[F=\sum_{1\leq i\leq j \leq n+1} G_{ij}F_iF_j \in (I^2)_d\] and let $X=V(F)$.
The generators of $I^2$ have  degree  at most $2\lfloor d/n \rfloor+2$. This is at most  $d$ for our choices of $n$ and $d$.
Hence the base locus of $(I^2)_d$ is precisely $V(I)$ and by Bertini's theorem $X$ is smooth outside $V(I)$, hence $\Sigma=X_{\sing}=V(I)$. It remains to check that each singular point is a  node. For each point in $\Sigma$, the dehomogenization of the  polynomials $F_1,\dots F_{n}$ yields a  system of local coordinates. Recall that matrix  $\frac{ \partial^2 F}{\partial F_i\partial F_j}$ equals the matrix with entries $G_{ij}$ for $i<j$, $G_{ji}$ for $i>j$ and $2G_{ii}$ for $i=j$.
In order to have an node at every point in $\Sigma$ we need that the determinant of this matrix is invertible modulo $I$, which holds true for a generic choice of the $G_{ij}$.

From Proposition~\ref{prpHilbBound} it follows that $H^1(I_{\Sigma}(k))=0$ for $k\geq n \lfloor \frac{d}{n} \rfloor $. Since $n \lfloor \frac{d}{n} \rfloor\leq d$. This proves the claim on $H^1(I_\Sigma(d))$.
 
 If $H^1(I_{\Sigma}(d-1))\neq 0$ then equality holds in the previous formula, in particular, $n \lfloor \frac{d}{n}\rfloor =d$, which is in turn equivalent to  $n\mid d$. Hence $H^1(I_{\Sigma}(d-1))=0$ if $n\nmid d$. In the case $d\mid n$  we replace $F_{n}$ by a form of one degree less and obtain also  $H^1(I_{\Sigma}(d-1))=0$. 
\end{proof}

In \cite{CiGa26} a component of the Veronese variety $V^3_{d,\delta}$ is called regular if for a general $[X]\in V^3_{d,\delta}$ with $\Sigma$ the singular locus of $X$ we have $h^1(\Ps^3,\cO(d)\otimes \cI_{\Sigma})=0$. If is called very regular if $h^1(\Ps^3,\cO(d-1)\otimes \cI_{\Sigma})=0$. In \cite[Theorem 3.3]{CiGa26}
they show that if $\delta$ is chosen such that $V^3_{m,\delta}$ has a very regular component then the locus of degree $d$ surfaces with an $m$-fold point is contained in the closure of $V^3_{d,\delta}$ for all $d\geq m$ 

\begin{corollary} Let $X$ be a surface of degree $d$ with an ordinary $m$-fold point and no further singularities. Then there exists a family of degree $d$ surfaces with $\delta:=(\lfloor \frac{ m}{3}\rfloor+1)^{3}$ nodes degenerating to $X$ if $3\nmid m$ or $\delta:=\frac{m}{3}( \frac{m}{3}+1)^2$ nodes if $3\mid m$. 
\end{corollary}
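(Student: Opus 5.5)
The plan is to combine Theorem~\ref{thmLowerSur} in the case $n=3$, with the degree there taken to be $m$, with the deformation result \cite[Theorem 3.3]{CiGa26} recalled just above. That result says: if $V^3_{m,\delta}$ has a very regular component, then the locus $\mathcal{T}^3_{d,m}$ of degree $d$ surfaces with an ordinary $m$-fold point lies in the closure of $V^3_{d,\delta}$ for every $d\geq m$. So the task is to produce a very regular nodal surface of degree $m$ with the stated number of nodes, and then specialize the containment of $\mathcal{T}^3_{d,m}$ in $\overline{V^3_{d,\delta}}$ to our given surface $X$.

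\textbf{Step 1 (the surface of degree $m$).} Apply Theorem~\ref{thmLowerSur} with $n=3$ and degree $m$; this needs $m\geq 3$ and $m\neq 3$, i.e.\ $m\geq 4$.
\begin{itemize}
\item If $3\nmid m$, the theorem gives a nodal surface $Y\subset\Ps^3$ of degree $m$ with $(\lfloor m/3\rfloor+1)^3$ nodes, and it also gives $H^1(I_{\Sigma}(m-1))=0$ for $\Sigma=Y_{\sing}$. This is exactly very regularity.
\item If $3\mid m$, the second part of the theorem gives a nodal surface with $\frac{m}{3}(\frac{m}{3}+1)^2$ nodes and $H^1(I_\Sigma(m-1))=H^1(I_\Sigma(m))=0$.
\end{itemize}

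\textbf{Step 2 (the irreducible component).} Let $W$ be an irreducible component of $V^3_{m,\delta}$ containing $[Y]$. The $h^1$-vanishing at $[Y]$ propagates to a general member of $W$ by upper semicontinuity of $h^1(\cO(m-1)\otimes\cI_{\Sigma_t})$ in the flat family of singular loci, which is finite of constant length $\delta$ over $V^3_{m,\delta}$. Hence $W$ is very regular in the sense of \cite{CiGa26}. Invoking \cite[Theorem 3.3]{CiGa26} then gives $\mathcal{T}^3_{d,m}\subset\overline{V^3_{d,\delta}}$ for all $d\geq m$, so $X$ is the special fibre of a family whose general fibre has $\delta$ nodes.

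\textbf{Step 3 (small $m$).} The cases $m=2,3$ are not covered by Theorem~\ref{thmLowerSur}, and I would treat them by hand.
\begin{itemize}
\item For $m=2$ the formula gives $\delta=1$. An ordinary double point is itself a node, so $X$ already lies in $V^3_{d,1}$.
\item For $m=3$ the formula gives $\delta=2^2=4$. Use the Cayley cubic, which has four nodes forming a very regular (indeed complete intersection of type $(2,2,1)$) configuration. Alternatively, cite \cite{CiGa26}, where $\delta_S=4$ is established for triple points.
\end{itemize}

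The main obstacle I expect is Step 2: making sure that very regularity at one specific point $[Y]$ yields a very regular component, i.e.\ that the hypothesis of \cite[Theorem 3.3]{CiGa26} concerns a general member and is satisfied here. The semicontinuity argument above is what I would rely on. Note also that $h^1$-vanishing implies the nodes impose independent conditions, so $V^3_{m,\delta}$ is smooth of the expected codimension at $[Y]$. Consequently $[Y]$ lies on a unique component, which removes any ambiguity about which component is meant.
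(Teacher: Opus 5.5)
Your proposal is correct and follows the paper's proof: Theorem~\ref{thmLowerSur} with $n=3$ and degree $m$ gives a very regular component of $V^3_{m,\delta}$, and \cite[Theorem 3.3]{CiGa26} then finishes the argument. Your semicontinuity step and your separate handling of $m=2,3$ fill in details the paper leaves implicit, since Theorem~\ref{thmLowerSur} excludes these cases and the paper's proof does not address them.

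There is one slip, and it does not affect the conclusion. The four nodes of the Cayley cubic are not coplanar, so they are not a complete intersection of type $(2,2,1)$. They do impose independent conditions already on linear forms, so $h^1(\Ps^3,\cO(2)\otimes\cI_{\Sigma})=0$ and very regularity still holds.
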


\begin{proof}
From the previous proposition it follows that the Severi variety $V_{m,\delta}^3$ has a very regular component.  The result follows from \cite[Theorem 3.3]{CiGa26}.
\end{proof}

\begin{remark} Combining the above results we obtain polynomial upper and lower bounds for $\delta_S$, where the lower bound is asymptotically $\frac{1}{27}m^3$ and the upper bond is asymptotically $\frac{23}{48} m^3$.

One might attempt to use other constructions of surfaces of degree $m$ with many nodes in order to produce surfaces $S$ of degree $d$ with an ordinary $m$-fold point and large $\delta_S$.
The best known asymptotic constructions of degree $m$ surfaces with nodes seem to have  $\frac{5}{12} m^3$ nodes.
However, in this construction the ideal of the nodes  has syzygies in degree $\lfloor \frac{3m}{2}\rfloor$, and therefore the nodes do not impose independent conditions in all degrees up to  $\lfloor \frac{3m}{2}\rfloor-4$. In particular, $S$ does not lie on a very regular component for $m>7$ and we cannot apply the result by Ciliberto--Galati.
\end{remark}

\section{Lower bound for $\delta_X$}\label{secLowSec}
In this section we study a certain family  of nodal hypersurfaces in $\Ps^n$  with as special fiber a hypersurface with an ordinary $m$-fold point.

We use this construction to show that  for any $X\subset \Ps^n$ of degree $d$ with an ordinary $m$-fold point and no further singularities we have  $\delta_X\geq \frac{(m+n)^n}{(n+1)^n}$.

Moreover, for fixed $n,m,d$ with $n\geq 3$ and $d\geq m$ we construct a hypersurface $X\subset \Ps^n$ of degree $d$ with an ordinary $m$-fold point and no further singularities such that $\delta_X\geq \left( \frac{m-1}{2} \right)^n$ if $m$ is odd and $\delta_X \geq \left( \frac{m}{2}\right)^n$ if $m$ is even.

We  use the following construction of a family of nodal hypersurfaces degenerating to a hypersurface with an $m$-fold point.

The idea is to take a nodal hypersurface $X_1$ whose singular locus is a complete intersection in $\Ps^n$ of length at most $(m/2)^n$  and contained in $\Ps^n\setminus V(x_0)$.  We then construct a family of hypersurfaces $X_t$ such that for all but finitely many $t$, we have  that the hypersurface $X_1$ has a node at $(1:a_1:\dots,a_n)$ if and only if $X_t$ has a node at $(1:ta_1:\dots:ta_n)$. Then at $t=0$ all the nodes collide in one singular point. We will then show that for a generic choice the singularity is an ordinary $m$-fold point. 

\begin{construction}\label{ConstMain}
Fix positive integers $n, d,d_1,\dots,d_n,m$ such that $m\leq d$ and for all $i$ we have $d_i\leq \frac{1}{2}m$. 

For $i=1,\dots,n$ let $F_i$ be a general form in $x_0,\dots,x_n$ of degree $d_i$, such that $V(F_1,\dots,F_n)$ is a  reduced complete intersection of length $\prod d_1,\dots,d_n$.
Choose general forms $G_{ij}$ in $x_0,\dots,x_m$ of degree $d-d_i-d_j\geq 0$. 

Set $F=\sum_{1\leq i \leq j \leq n} G_{ij}F_iF_j$.
For $t\in \C$ and $i=1,\dots,n$ define $\tilde{F_i}(x_0,\dots,x_n)$ as $F_i(tx_0,x_1,\dots, x_n)$. Write 
\[ G_{ij}=\sum_{k=0}^{d-d_i-d_j} x_0^{k}  g_{ijk}(x_1,\dots,x_n).\]
Then $g_{ijk}$ is a form of degree $d-d_i-d_j-k$.
Define \[\tilde{G}_{ij}=\sum_{k=0}^{d-d_i-d_j} x_0^k  t^{\max(0,k-d+m)} g_{ijk}(x_1,\dots,x_n).\]

Let $X_t=V(\sum_{1\leq i\leq j \leq n} \tilde{F_i}\tilde{F_j}\tilde{G}_{ij})$, and let $X_1=V(\sum_{1\leq i\leq j \leq n} F_iF_jG_{ij})=V(F)$.
\end{construction}

\begin{proposition} For a general choice of the $F_i$ and the $G_{ij}$ we have that for all but finitely many non zero values of $t$ the hypersurface  $X_t$ is a nodal hypersurface of degree $d$ with $\prod_{i=1}^n d_i$ nodes.
\end{proposition}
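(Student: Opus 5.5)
Write $N=\prod_{i=1}^n d_i$ and $H_t=\sum_{i\leq j}\tilde F_i\tilde F_j\tilde G_{ij}$. The plan is to prove the statement at $t=1$ using the argument of Theorem~\ref{thmLowerSur}, and then spread it to all but finitely many $t\neq 0$ by an openness argument on the one-parameter family. The structure I will exploit is that for every $t\neq 0$ the polynomial $H_t$ lies in $I_t^2$. Here $I_t=(\tilde F_1,\dots,\tilde F_n)=\varphi_t^*(F_1,\dots,F_n)$ and $\varphi_t\colon(x_0:\dots:x_n)\mapsto(tx_0:x_1:\dots:x_n)$ is an automorphism of $\Ps^n$. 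Hence $\Sigma_t:=V(I_t)=\varphi_t^{-1}(V(F_1,\dots,F_n))$ is a reduced complete intersection of length $N$. Since $H_t\in I_t^2$, every point of $\Sigma_t$ is a singular point of $X_t$. (The $G_{ij}$ are meant to be forms in $x_0,\dots,x_n$.)

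\emph{Step 1: the case $t=1$.} Here $\tilde F_i=F_i$ and $\tilde G_{ij}=G_{ij}$, so $X_1=V(F)$. We have $d_i+d_j\leq m\leq d$, so the forms $F_iF_j$ have degree at most $d$. Therefore $F$ is a general member of the linear system $(I^2)_d$, whose base locus is $V(I)$. By Bertini, $X_1$ is smooth outside $V(I)$.

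At each point $p\in V(I)$ the dehomogenized $F_i$ form local coordinates, because $V(I)$ is a reduced complete intersection. In these coordinates the Hessian of $F$ at $p$ is the symmetric matrix with entries $G_{ij}(p)$ off the diagonal and $2G_{ii}(p)$ on the diagonal. For general $G_{ij}$ its determinant is nonzero at each of the finitely many points of $V(I)$. So $X_1$ has exactly $N$ nodes; this is verbatim the proof of Theorem~\ref{thmLowerSur}. Since $H_1\neq 0$, the form $H_t$ is also nonzero of degree $d$ for all but finitely many $t$.

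\emph{Step 2: openness in $t$.} Let $\mathcal{J}\subset \Ps^n\times \C^*$ be the family of Jacobian (singular) subschemes $\mathrm{Sing}(X_t)$, cut out by the partials of $H_t$. The projection $\mathcal{J}\to\C^*$ is proper, so by upper semicontinuity of fibre dimension the set $U_0$ of $t$ with $\mathrm{Sing}(X_t)$ finite is Zariski open. Over $U_0$ the map is finite, so $t\mapsto \length \mathrm{Sing}(X_t)$ is upper semicontinuous, being the fibre rank of a coherent sheaf pushed forward along a finite map.

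Because $\Sigma_t\subset \mathrm{Sing}(X_t)$, this length is always at least $N$. At $t=1$ the singular scheme consists of $N$ nodes, each of which has Jacobian scheme a reduced point, so the length equals $N$ and $1\in U_0$. Hence the set $U=\{t\in U_0\mid \length\mathrm{Sing}(X_t)\leq N\}$ is a nonempty Zariski open subset of $\C^*$, so it omits only finitely many values of $t$.

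For $t\in U$ we get $\mathrm{Sing}(X_t)=\Sigma_t$ as schemes, and this scheme is reduced. An isolated hypersurface singularity whose Jacobian scheme is a reduced point has Milnor number (equivalently, Tjurina number) $1$, i.e.\ it is a node. Therefore $X_t$ is nodal with exactly $N$ nodes.

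The main obstacle is Step 2, namely justifying semicontinuity of the length of the singular scheme in the family. The cleanest route is the one sketched above: first restrict to the open set where singularities are isolated, then use finiteness of the projection.

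A more hands-on alternative avoids this. The non-nodality condition at $\Sigma_t$ is the vanishing of a Hessian determinant along a finite étale family of points, which is closed in $t$. The locus of extra singularities outside $\Sigma_t$ can be handled by noting that its closure in $\Ps^n\times\C^*$ has image a constructible set missing $t=1$. This second route needs care near $\Sigma_t$, which is why I prefer the length argument.
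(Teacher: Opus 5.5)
Your proof is correct and follows the paper's route. You establish the claim at $t=1$ via Bertini on $(I^2)_d$ and the Hessian computation in the local coordinates given by the $F_i$, then spread it to all but finitely many $t\neq 0$ using $\Sigma_t\subset \mathrm{Sing}(X_t)$ and $\Sigma_t\cong\Sigma_1$. The paper only asserts this spreading step; your upper semicontinuity of the length of the Jacobian scheme (total Tjurina number) over the finite locus rigorously justifies it, ruling out extra singularities and proving nodality at the same time.
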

\begin{proof}
By Bertini's theorem one has that for fixed $F_1,\dots F_n$ and general $G_{ij}$ the surface $X_1$ is  singular at $\Sigma:=V(F_1,\dots,F_n)$ and smooth elsewhere. Moreover, if the $F_1,\dots,F_n$ form a set-theoretic intersection of length $\prod d_i$ then for general $G_{ij}$ we have that all singularities are nodes:
The dehomogenization of the  polynomials $F_1,\dots F_{n}$ form a local system of coordinates at each of the points of $\Sigma$.
Hence if we pick some $F=\sum_{1\leq i\leq j \leq n+1} G_{ij}F_iF_j$ then we have that the matrix  $\frac{ \partial^2 F}{\partial F_i\partial F_j}$ equals the matrix with entries $G_{ij}$ for $i<j$, $G_{ji}$ for $i>j$ and $2G_{ii}$ for $i=j$.
In order to have nodes everywhere we need that the determinant of this matrix is invertible modulo $I_{\Sigma}$, which holds true for a generic choice of the $G_{ij}$.

For $t\neq 0$ we have that $X_t$ is singular at $\Sigma_t=V(\tilde{F}_1,\dots,\tilde{F}_n)$, and for all but finitely many values of $t$ we have that the singular locus is precisely this locus.

 For $t\neq 0$ the set $\Sigma_t$ is projectively equivalent to $\Sigma_1$, hence for all but finitely many $t$ we have that $X_t$ has $\prod_{i=1}^n d_i$  nodes and no further singularities.
\end{proof}

\begin{proposition} For a general choice of the $F_i$ and the $G_{ij}$, the hypersurface $X_0$ has an ordinary $m$-fold point and no further singularities.
\end{proposition}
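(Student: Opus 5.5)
The plan is to write down the equation of $X_0$ explicitly, show that $X_0$ is a general member of a linear system whose base locus is the single point $P=(1:0:\dots:0)$, and then apply Bertini's theorem twice: once on $\Ps^n$ and once to the tangent cone in $\Ps^{n-1}$.

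\textbf{Step 1: the equation of $X_0$.} Setting $t=0$ in Construction~\ref{ConstMain} gives $\tilde F_i=f_i(x_1,\dots,x_n):=F_i(0,x_1,\dots,x_n)$, a form of degree $d_i$ in $x_1,\dots,x_n$ only. Since $F_i$ is general, $f_1,\dots,f_n$ are $n$ general forms in $n$ variables, so they have no common zero in $\Ps^{n-1}$. Similarly, $\tilde G_{ij}|_{t=0}=\sum_{k=0}^{d-m}x_0^k g_{ijk}$, because the terms with $k>d-m$ acquire a positive power of $t$. Hence
\[ X_0=V\Big(\sum_{i\leq j} f_if_j\sum_{k=0}^{d-m}x_0^kg_{ijk}\Big).\]
In the affine chart $x_0=1$ around $P$, the summand $f_if_jx_0^kg_{ijk}$ is homogeneous of degree $d-k\geq m$ in $x_1,\dots,x_n$. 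Therefore $P$ has multiplicity at least $m$. The degree $m$ part (the terms with $k=d-m$) is
\[ h_m=\sum_{i\leq j}f_if_j\,g_{ij,d-m},\qquad \deg g_{ij,d-m}=m-d_i-d_j\geq 0. \]
This degree is nonnegative because $d_i\leq m/2$.

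\textbf{Step 2: the tangent cone is smooth.} The $g_{ij,d-m}$ are general forms, so $h_m$ is a general element of $(J^2)_m$, where $J=(f_1,\dots,f_n)\subset\C[x_1,\dots,x_n]$. Every generator $f_if_j$ of $J^2$ has degree $\leq m$. Multiplying by monomials of the right degree shows that the base locus of $(J^2)_m$ in $\Ps^{n-1}$ is $V(J)=\emptyset$. So $(J^2)_m$ is base point free, and Bertini gives that $V(h_m)\subset\Ps^{n-1}$ is a smooth hypersurface of degree $m$; in particular $h_m\neq 0$. Hence the multiplicity at $P$ is exactly $m$ and the tangent cone is the cone over a smooth hypersurface, so $P$ is an ordinary $m$-fold point.

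\textbf{Step 3: no other singularities.} Because the $g_{ijk}$ with $k\leq d-m$ are general, $X_0$ is a general member of the linear system
\[ L=\Big\{\sum_{i\leq j} f_if_j\,H_{ij} \;:\; H_{ij}\in\C[x_0,\dots,x_n]_{d-d_i-d_j},\ \deg_{x_0}H_{ij}\leq d-m\Big\}. \]
Taking $H_{ij}=x_0^{k}\,x_l^{\,d-d_i-d_j-k}$ with $k\leq d-m$ and $l\in\{1,\dots,n\}$, we see that a point of the base locus of $L$ must satisfy $x_1=\dots=x_n=0$ or $f_1=\dots=f_n=0$. Either way the point has $x_1=\dots=x_n=0$, since the $f_i$ have no common zero in $\Ps^{n-1}$. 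So the base locus of $L$ is $\{P\}$, and by Bertini a general member of $L$ is smooth away from $P$. Combined with Step 2, $X_0$ has an ordinary $m$-fold point at $P$ and no further singularities.

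The main obstacle is justifying that $X_0$ really is a \emph{general} member of $L$ and of $(J^2)_m$. The $f_i$ come from the $F_i$, which were chosen with the extra requirement that $V(F_1,\dots,F_n)$ is reduced of length $\prod d_i$. I would handle this by observing that this requirement and "the $f_i$ have no common zero in $\Ps^{n-1}$" are both nonempty Zariski-open conditions on $(F_1,\dots,F_n)$, so they hold simultaneously for general $F_i$. Independently, the $g_{ijk}$ range over the full space of coefficients of $L$, so genericity of the $G_{ij}$ transfers directly to genericity in $L$.
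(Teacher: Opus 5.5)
Your proof is correct and follows the same route as the paper: write down the equation of $X_0$, apply Bertini to the base-point-free system $(J^2)_m$ on $\Ps^{n-1}$ to get a smooth tangent cone of degree $m$, and apply Bertini again on $\Ps^n$ to get smoothness away from $(1:0:\dots:0)$. You also carry out two steps the paper leaves implicit, namely the explicit base-locus computation for $L$ and the check that the genericity conditions are compatible.
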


\begin{proof} Using the generality assumption we have that the hypersurface $X_0$ has a singularity at $\Sigma:=V(F_1(0,x_1,\dots,x_n),\dots,{F}_n(0,x_1,\dots,x_n))$ and is smooth elsewhere.  The scheme  $\Sigma$ is supported at $(1:0:\dots:0)$.

Recall that  $X_0$ is the zero set of 
\[ \sum_{1\leq i \leq j \leq n} F_i(0,x_1,\dots,x_n)F_j(0,x_1,\dots,x_n)\left(\sum_{k=0}^{d-m} x_0^{k} g_{ijk}(x_1,\dots,x_n)\right)\]
This hypersurface has an $m$-fold point at $(1:0:\dots:0)$. The tangent cone $C_pX_0$ is given by
\[ \sum_{1\leq i\leq j\leq n} F_i(0,x_1,\dots,x_n)F_j(0,x_1,\dots,x_n) g_{ij(d-m)}(x_1,\dots,x_n)\]
Using the generality assumption we have that  the $f_i(0,x_1,\dots,x_n)$ do not have a common base point in $\Ps^{n-1}$. From Bertini's theorem it follows  that  a general choice of $g_{ijk}$  that the tangent cone $C_pX_0$ is a cone over a smooth hypersurface of degree $m$ in $\Ps^{n-1}$, in particular $X_0$ has an ordinary $m$-fold point at $(1:0:\dots:0)$. 
Applying  Bertini's theorem again, we obtain that for a general choice of the $G_{ij}$ the hypersurface $X_0$ has no singularities outside $\Sigma$.
\end{proof}

\begin{theorem}\label{thmExist} Fix integers $d,m,n$ with $n\geq 3$ and  $d\geq m>1$. Then there exists a hypersurface of degree $d$ in $\Ps^n$ with an ordinary $m$-fold point such that  $\delta_{X}\geq \left(  \lfloor \frac{m}{2}\rfloor \right)^n$.
\end{theorem}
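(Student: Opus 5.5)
We apply Construction~\ref{ConstMain} with $d_1=\dots=d_n=\lfloor \frac{m}{2}\rfloor$. This is allowed: each $d_i\leq \frac{1}{2}m$, and $d-d_i-d_j\geq d-m\geq 0$, so the forms $G_{ij}$ exist. Since $m>1$ we have $d_i\geq 1$.

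The first step is to choose $F_1,\dots,F_n$ general of degree $\lfloor m/2\rfloor$. By Bertini they cut out a reduced complete intersection of length $\lfloor m/2\rfloor^n$, and generality also lets us assume it avoids $V(x_0)$. Moreover, the restrictions $F_i(0,x_1,\dots,x_n)$ then have no common zero in $\Ps^{n-1}$, since $n$ general forms in $n$ variables have no common projective zero. We then choose the $G_{ij}$ general as well.

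The two propositions following Construction~\ref{ConstMain} give the family $(X_t)_{t\in\C}$:
\begin{itemize}
\item for all but finitely many $t\neq 0$, $X_t$ is a nodal hypersurface of degree $d$ with exactly $\prod d_i=\lfloor m/2\rfloor^n$ nodes;
\item $X_0$ is a degree $d$ hypersurface with an ordinary $m$-fold point at $(1:0:\dots:0)$ and no other singularities.
\end{itemize}
Remove the finitely many bad values of $t$ and restrict to a small punctured disc around $0$. This gives a one-parameter family whose general member lies in $V^n_{d,\delta}$ with $\delta=\lfloor m/2\rfloor^n$ and whose special member is $X_0$.

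The family $t\mapsto [X_t]$ is a morphism from $\C$ to $\Ps(\C[x_0,\dots,x_n]_d)$. This needs the equation of $X_0$ not to vanish identically, which holds because its tangent cone at $(1:0:\dots:0)$ is nonzero. It follows that $[X_0]$ lies in the closure of $V^n_{d,\delta}$. Setting $X=X_0$ gives $\delta_X\geq \lfloor m/2\rfloor^n$, which proves the theorem.

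The main obstacle is checking that one general choice of the $F_i$ and $G_{ij}$ satisfies all the generality conditions at once. Each condition in the two propositions is open (Bertini-type, or nonvanishing of a determinant modulo $I_\Sigma$), so it suffices that each is nonempty and then intersect the resulting open sets. The delicate point is the tangent cone of $X_0$, which is $\sum F_i(0,x)F_j(0,x)\,g_{ij(d-m)}$. Its smoothness needs that the degree-$m$ linear system spanned by the products $F_iF_j(0,x)$ times arbitrary forms of degree $m-d_i-d_j$ has no base points in $\Ps^{n-1}$. This holds because the $F_i(0,x)$ have no common zero. By Bertini the general member is then smooth, which gives the ordinary $m$-fold point.
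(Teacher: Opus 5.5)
Your proposal is correct and follows the paper's proof: you apply Construction~\ref{ConstMain} with all $d_i=\lfloor m/2\rfloor$ and invoke the two propositions that follow it, exactly as the paper does. The extra checks you add are details the paper leaves implicit, and they are sound: that $d-d_i-d_j\geq 0$, that the equation of $X_0$ is nonzero so that the limit point in $\Ps(\C[x_0,\dots,x_n]_d)$ is $[X_0]$, and that the generality conditions can be imposed simultaneously.
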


\begin{proof} Pick $F_1,\dots,F_n$ generic forms of degree $\lfloor \frac{m}{2} \rfloor$ and $G_{ij}$ sufficiently generic forms of degree $d-2\lfloor \frac{m}{2}\rfloor$. Then $X_t$ is a family of singular hypersurfaces, with the property that the generic $X_t$ has $(\lfloor \frac{m}{2} \rfloor)^n$ nodes and no further singularities, and $X_0$  has an ordinary $m$-fold point and no further singularities.
\end{proof}

\begin{remark}
In the surface case we obtain examples  with  $\delta_X\geq \frac{1}{8} m^3$  for $m$ even and $\delta\geq \frac{1}{8}(m-1)^3$ for $m$ odd. However, the singular locus of the deformation  is a complete intersection of three surfaces of degree $\frac{m}{2}$ or $\frac{m-1}{2}$. This means that the highest syzygy is in degree $\frac{3m}{2}$, respectively $\frac{3m-3}{2}$. In particular, for $m\geq 10$ we have that $H^1(I_{\Sigma}(m))\neq 0$, hence we cannot apply  \cite[Theorem 3.3]{CiGa26} directly.

If $n>3$ then we have that $2^n\leq n!$. In particular, for fixed $n$ and $m$ sufficiently large we find that $\#\Sigma \geq \binom{n+m}{n}=h^0(\mathcal{O}_{\Ps^n}(m))$. Therefore $h^1(I_{\Sigma}(m))\neq 0$ for cardinality reasons. This implies that $V^n_{m,\delta}$ does not contain any very regular component.% In  order to find better lower bounds for $\delta$  by using a  generalisation of  \cite[Theorem 3.3]{CiGa26} one would need to weaken the general position requirement.
\end{remark}

The previous results produced for fixed $(n,d,m)$ an example $X$ for which the lower bound for $\delta_X$ holds true.
We  proceed by producing a lower bound for $\delta_X$ where $X$ is  a general hypersurface of degree $d$ with an $m$-fold point.

\begin{notation}
For a homogeneous ideal $I\subset \C[x_0,\dots,x_n]$, denote with $h_I(d)$ the Hilbert function of $I$ at $d$, namely $\dim_{\C} (S/I)_d$.
\end{notation}

We start with an auxiliary lemma:

\begin{lemma} Let $F_1,\dots,F_n\in \C[x_1,\dots,x_n]$ be a regular sequence of homogeneous forms of degree $d_1,\dots,d_n$, such that $1\leq d_1\leq d_2\leq \dots\leq d_n$. Let $I=(F_1,\dots,F_n)$.

Then  $h_{I^{k}}(d)=0$ for all $k\geq 1$ and $d\geq (k-1)d_n-n+ \sum_{i=1}^n d_i$.
\end{lemma}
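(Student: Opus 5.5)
The plan is to reduce everything to the Artinian complete intersection $S/I$, with $S=\C[x_1,\dots,x_n]$, using the standard structure of powers of an ideal generated by a regular sequence. Since $F_1,\dots,F_n$ is a regular sequence of homogeneous forms, the associated graded ring $\bigoplus_j I^j/I^{j+1}$ is a polynomial ring over $S/I$ in $n$ variables $T_1,\dots,T_n$, with $T_i$ of internal degree $d_i$. Concretely, as graded $S/I$-modules,
\[ I^j/I^{j+1}\cong \bigoplus_{a\in \Z_{\geq 0}^n,\ |a|=j} (S/I)\Bigl(-\sum_i a_id_i\Bigr). \]
I will take this as the first step and quote it, for instance from the theory of quasi-regular sequences, rather than reprove it.

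The second step uses the filtration $I^{k}\subset I^{k-1}\subset\dots\subset I\subset S$. This gives
\[ h_{I^k}(d)=\sum_{j=0}^{k-1}\dim (I^j/I^{j+1})_d=\sum_{j=0}^{k-1}\sum_{|a|=j}\dim (S/I)_{d-\sum a_id_i}. \]
Because $d_1\leq\dots\leq d_n$, every shift satisfies $\sum a_id_i\leq j d_n\leq (k-1)d_n$. So for each summand, $d-\sum a_id_i\geq d-(k-1)d_n$. It therefore suffices to show that $(S/I)_e=0$ for all $e\geq d-(k-1)d_n$. Under the hypothesis $d\geq (k-1)d_n-n+\sum d_i$, that range contains all $e\geq \sum d_i-n$.

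The third step is the top-degree computation for $S/I$. The Koszul complex on $F_1,\dots,F_n$ is a minimal free resolution, exactly as in the proof of Proposition~\ref{prpHilbBound}. This gives the Hilbert series $\prod_{i=1}^n (1+t+\dots+t^{d_i-1})$, so $(S/I)_e=0$ for $e>\sum_i(d_i-1)=\sum d_i-n$.

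The main obstacle is the endpoint $e=\sum d_i-n$. At that degree the Hilbert series has coefficient $1$, because $S/I$ is Gorenstein with one-dimensional socle. So the argument above gives vanishing only for $d\geq (k-1)d_n-n+1+\sum d_i$. I will first try to close the gap by rechecking the normalisation of degrees and of $h_I$, for instance whether the intended count uses $n-1$ variables in the affine chart, since that is the form in which the lemma is later applied. If that fails, the boundary term $d=(k-1)d_n-n+\sum d_i$ has to be treated separately. However, the summand with $a=(0,\dots,0,k-1)$ contributes $\dim(S/I)_{\sum d_i-n}=1$ there, so the inequality in the statement would need to be read as strict. In the application, that strict version is what one uses to compare against the degree $d$.
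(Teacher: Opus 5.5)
Your argument is correct, and so is your diagnosis of the endpoint: the lemma as printed is false, not just hard to prove. $S/I$ is an Artinian complete intersection with socle in degree $\sum d_i-n$, so $h_I(\sum d_i-n)=1$. The simplest instance is $F_i=x_i$, $k=1$, $d=0$, where $h_I(0)=1$. Your summand $a=(0,\dots,0,k-1)$ shows that $h_{I^k}\bigl((k-1)d_n-n+\sum d_i\bigr)\geq 1$ for every $k\geq 1$. So the true statement is vanishing for $d>(k-1)d_n-n+\sum d_i$. You can drop the idea of renormalising to $n-1$ variables. Both in the lemma and in its use in Theorem~\ref{thmAll} there are $n$ forms in $n$ variables, and no reinterpretation rescues the boundary case. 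The paper's own proof breaks at exactly this point: its base case asserts $h_I(d)=0$ for $d\geq \sum d_i-n$.

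Your route differs from the paper's. The paper inducts on $k$. Let $N$ be the degree from which $I$ contains all forms. A form $G$ of degree $D\geq (k-1)d_n+N$ lies in $I$, so $G=\sum G_iF_i$ with $\deg G_i=D-d_i\geq (k-2)d_n+N$. Induction then gives $G_i\in I^{k-1}$, hence $G\in I^k$. With the corrected value $N=\sum d_i-n+1$, this yields the same bound as yours. That argument is more elementary and more general. After the base case it uses only that $I$ is generated in degrees $\leq d_n$ and contains all forms of degree $\geq N$; regularity is not needed there. Your route needs quasi-regularity, $\operatorname{gr}_I S\cong (S/I)[T_1,\dots,T_n]$. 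In exchange it gives the exact Hilbert function of $S/I^k$, and hence both the vanishing and its sharpness. One small wording slip: under the hypothesis, the range $e\geq d-(k-1)d_n$ is contained in the range $e\geq \sum d_i-n$, not the other way round.

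Your closing remark about the application is not accurate. In Theorem~\ref{thmAll} the lemma is used with $k=2$ and all $d_i=e:=\lfloor\frac{m+n}{n+1}\rfloor$. One needs $h_{I^2}(d')=0$ for every $d'\geq m$, including $d'=m$, which is the degree of the tangent cone $h_{d-m}$. The corrected threshold is $(n+1)e-n+1$. It is at most $m$ only when $(n+1)e\leq m+n-1$, which fails when $(n+1)\mid(m+n)$. For example, take $n=3$, $m=5$, $e=2$. Then $(I^2)_5$ is spanned by at most $6\cdot 3=18$ forms, while $\dim \C[x_1,x_2,x_3]_5=21$. So the off-by-one does propagate into the application. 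There one would need $e=\lfloor\frac{m+n-1}{n+1}\rfloor$, or a separate argument in that case.
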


\begin{proof} We prove the claim by induction on $k$.
Suppose first that $k=1$. Recall that $F_1=F_2=\dots=F_n=0$ defines a scheme-theoretic complete intersection in $\As^n$, hence is finite, and since the $F_i$ are homogeneous this scheme is concentrated at the origin.
Hence $h_I(d)=0$ for $d\geq \sum_{i=1}^n d_i-n$,

Next, suppose  that $k\geq 2$ holds. Let $G$ be a homogeneous form of degree $d$ at least $(k-1)d_n-n+\sum (d_i)$. Then $h_I(d)=0$. Hence there exists forms $G_1,\dots, G_n$  of degree at least $(k-1)d_n-d_i-n+\sum d_i\geq (k-2)d_n-n+\sum d_i$ such that
\[ G=\sum_{i=1}^n G_iF_i.\]
By induction $G_i\in  I^{k-1}$ for $i=1,\dots,n$ and  therefore $G\in I^k$. Hence $h_{I^k}(d)=0$ for $d\geq (k-1)d_n+ \sum_{i=1}^n d_i-n$.
\end{proof}

\begin{theorem}\label{thmAll} Let $X\subset \Ps^n$ be a generic hypersurface of degree $d$ with an ordinary $m$-fold point and no further singularities. Then $\delta_X \geq \frac{(m+n)^n}{(n+1)^n}$
\end{theorem}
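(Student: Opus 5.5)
The plan is to show that every degree $d$ hypersurface with an ordinary $m$-fold point arises as $X_0$ in a modified Construction~\ref{ConstMain}. The degrees $d_1=\dots=d_n=e$ will be chosen with $e=\lfloor\frac{m+n}{n+1}\rfloor$, so that the general fibre has $e^n$ nodes. Since $\mathcal{T}^n_{d,m}$ is irreducible and the closure of $V^n_{d,e^n}$ is closed, it suffices to show that the set of $X_0$ obtained in this way is dense in $\mathcal{T}^n_{d,m}$.

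Put $p=(1:0:\dots:0)$ and write the equation of a general $X\in\mathcal{T}^n_{d,m}$ as $\sum_{k=0}^{d-m}x_0^k h_{d-k}(x_1,\dots,x_n)$. Here $h_m$ is a general form of degree $m$ and the $h_j$ with $j>m$ are arbitrary forms of degree $j$. By Construction~\ref{ConstMain}, $X_0$ is the zero set of
\[ \sum_{i\le j} f_if_j\sum_{k=0}^{d-m}x_0^k g_{ijk}, \qquad f_i=F_i(0,x_1,\dots,x_n). \]
So $X_0$ is exactly the hypersurface $\sum_k x_0^k H_{d-k}$ with each $H_{d-k}$ ranging over $(J^2)_{d-k}$, where $J=(f_1,\dots,f_n)\subset\C[x_1,\dots,x_n]$. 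Since $F_i$ is general, $f_1,\dots,f_n$ is a general regular sequence of forms of degree $e$. It therefore suffices to show $(J^2)_j=\C[x_1,\dots,x_n]_j$ for all $j\ge m$. Then every such $X$ (not just a dense subset) is realised as an $X_0$, and the generality conditions on the $g_{ijk}$ from the two previous propositions are open conditions that are nonempty, so a small perturbation settles genericity.

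Next I apply the auxiliary lemma with $k=2$ and $d_i=e$. It gives $h_{J^2}(j)=0$ for $j\ge e-n+ne=(n+1)e-n$. The condition $(n+1)e-n\le m$ is equivalent to $e\le \frac{m+n}{n+1}$, which holds for $e=\lfloor\frac{m+n}{n+1}\rfloor$. Construction~\ref{ConstMain} also requires $e\le m/2$. This holds because $\frac{m+n}{n+1}\le \frac m2$ for $n\ge3$ and $m\ge 3$; for $m=2$ we have $e=1=m/2$. Hence $\delta_X\ge e^n=\lfloor\frac{m+n}{n+1}\rfloor^n$, which is the form of the bound stated in the introduction.

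The main obstacle is the genericity step. I must check that the set of data $(F_i,G_{ij})$ for which the proposition on $X_t$ (nodal with $e^n$ nodes for almost all $t$) holds is a nonempty open set. I must also check that the resulting map onto $\mathcal{T}^n_{d,m}$ is dominant. Dominance follows from the surjectivity $(J^2)_j=S_j$ for a fixed general $J$. Combined with the openness of the nodal condition on the $G_{ij}$ for fixed $F_i$, the image contains a dense open subset of $\mathcal{T}^n_{d,m}$. Because $\overline{V^n_{d,\delta}}$ is closed, it then contains all of $\mathcal{T}^n_{d,m}$, not only a generic member.
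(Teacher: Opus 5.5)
Your route is the paper's own: realise $X$ as the central fibre $X_0$ of Construction~\ref{ConstMain} with all $d_i=e$, reduce to $(J^2)_j=S_j$ for $j\ge m$ (where $S=\C[x_1,\dots,x_n]$), and invoke the auxiliary lemma with $k=2$. Your extra checks, namely $e\le m/2$ and the density/closure argument giving all of $\mathcal{T}^n_{d,m}$, are fine. The gap is in the numerical input you take from the auxiliary lemma: that lemma is off by one already for $k=1$.

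For a regular sequence of forms of degrees $d_i$ in $n$ variables, $S/I$ is Gorenstein with socle in degree $\sum d_i-n$, so $h_I(\sum d_i-n)=1$. The correct bound is $h_{I^k}(d)=0$ for $d\ge (k-1)d_n+\sum d_i-n+1$. For $k=2$ and $d_i=e$ this is sharp: since $J/J^2\cong (S/J)(-e)^{\oplus n}$, one has $h_{J^2}(j)=h_J(j)+n\,h_J(j-e)$, so $h_{J^2}((n+1)e-n)=n\neq 0$. Hence your condition must read $(n+1)e-n+1\le m$, i.e.\ $e\le\lfloor\frac{m+n-1}{n+1}\rfloor$. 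This fails for your $e$ exactly when $(n+1)\mid(m+n)$. Concretely, for $n=3$, $m=5$, $e=2$, the space $(J^2)_5$ is $18$-dimensional inside the $21$-dimensional space of ternary quintics. So the tangent cone of a general $X$ is not in $(J^2)_5$, and $X$ is not an $X_0$ for your fixed general $J$. The paper's proof makes the identical slip, so as written both arguments only give $\lfloor\frac{m+n-1}{n+1}\rfloor^n$.

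To repair the case $m=(n+1)e-n$ you must let $J$ depend on the tangent cone. Components of degree $>m$ are harmless for every complete intersection $J$ of type $(e,\dots,e)$. So it suffices that $\Phi(f,c)=\sum_{i\le j}c_{ij}f_if_j$, from $S_e^{\,n}\times S_{m-2e}^{\binom{n+1}{2}}$ to $S_m$, is dominant. Take $c_{ii}=v/2$ and $c_{ij}=0$ for $i<j$, with $v\in S_{m-2e}\setminus J$. Such $v$ exists because $h_J(m-2e)=h_J(e)=\dim S_e-n>0$, by Gorenstein symmetry and $e\ge 2$. The image of $d\Phi$ contains $(J^2)_m$ and all $v\sum_i\dot f_if_i$. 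Since $S_m=J_m$ and $J_m/(J^2)_m\cong\bigoplus_i (S/J)_{n(e-1)}\,\bar f_i\cong\C^n$, the perfect pairing $(S/J)_e\times(S/J)_{m-2e}\to(S/J)_{n(e-1)}$ makes $d\Phi$ surjective. So $\Phi$ is dominant, and your density argument runs with $(F_i,g_{ijk})$ varying jointly.

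Finally, even after this repair one only gets $\lfloor\frac{m+n}{n+1}\rfloor^n$, which is the introduction's form. The displayed bound $\frac{(m+n)^n}{(n+1)^n}$ is strictly stronger whenever $(n+1)\nmid(m+n)$, and it is established neither by you nor by the paper.
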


\begin{proof}
After a change of coordinates we may assume that the singular point of $X$ is at $(1:0:0:\dots:0)$. Then $X$ is the zero set of
\[ \sum_{j=0}^{d-m} x_0^j h_j(x_1,\dots,x_n),\]
where $h_j$ is homogeneous of degree $d-j$.

We want  to show that $X$  can be obtained as the central fiber in Construction~\ref{ConstMain}.

Pick  general forms $F_1,\dots,F_n$ of degree $\lfloor \frac{m+n}{n+1}\rfloor$. Let $I=(F_1,\dots,F_n)$. We aim to choose 
 the $g_{ijk}$ such that the resulting $X_0$ equals $X$.
Recall that $X_0$ is given by
\[ \sum_{1\leq i \leq j \leq n} F_i(0,x_1,\dots,x_n)F_j(0,x_1,\dots,x_n)\left(\sum_{k=0}^{d-m} x_0^{k} g_{ijk}(x_1,\dots,x_n)\right).\]
Hence $X$ is the central fiber from  Construction~\ref{ConstMain} if we can find for all $k$ between $0$ and $d-m$ forms $g_{ijk}$ such that
\[ \sum_{1\leq i \leq j \leq n} F_i(0,x_1,\dots,x_n)F_j(0,x_1,\dots,x_n) g_{ijk}(x_1,\dots,x_n) =h_k.\]
The right hand side is a homogeneous form of degree $d-k\geq m$. From the previous lemma it follows that $h_{I^2}(d')=0$ for $d'\geq (n+1) \lfloor \frac{m+n}{n+1}\rfloor -n$. In particular $h_{I^2}(d-k)=0$ and therefore such the $g_{ijk}$ exist.
In particular,  $\delta_X \geq \lfloor \frac{(m+n)}{(n+1)}\rfloor ^n$
\end{proof}

\begin{remark}  A generalization of \cite[Theorem 3.3]{CiGa26} to arbitrary dimension would likely contain the hypothesis $h^1(\Ps^n,\cO(d-1)\otimes \cI_{\Sigma})=0$. This hypothesis forces the number of nodes to be at most 
\[ \binom{m+n}{m}\]
which for fixed $n$  is asymptotically $\lfloor \frac{m}{n}\rfloor^n$. This bound is asymptotically   better than our bound $\lfloor \frac{m+n}{n+1}\rfloor^n$.
\end{remark}

\section{Curve case}\label{secCurve}

In this section we show that the spectral bound from Proposition~\ref{prpSpectral} yields the well-known upper bound for $\delta$ in the curve case.

\begin{lemma} Let $f\in \C[x_1,x_2]$ be an isolated singularity in the origin. Write $\spe(f)=\sum_{\alpha} n_{\alpha} t^\alpha$. Then $\delta(f)= \frac{\mu+n_1}{2}$.
\end{lemma}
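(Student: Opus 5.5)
The proof uses two facts about the spectrum of an isolated plane curve singularity ($n=2$): the symmetry $s^2\spe(f)(1/s)=\spe(f)(s)$, so the Saito spectrum is symmetric about $1$, and the fact that all Saito spectral numbers lie in the open interval $(0,2)$. The latter holds because for $n=2$ the index $p$ ranges over $\{0,1\}$ and $\alpha\in(-1,0]$, so every exponent $\alpha+p+1$ lies in $(0,2]$. The value $2$ would require $\alpha=0$ and $p=1$, that is, a nonzero $\Gr_F^{0}C_0$. I would exclude this using the standard fact that the spectral numbers lie strictly inside $(0,n)$ (see \cite{DucoNotes}, \cite{Kuli}). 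Alternatively, symmetry pairs exponent $2$ with exponent $0$, which does not occur since $\alpha+p+1>0$.

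With these facts, $\delta(f)=\degsp_{I(0)}(f)$ counts spectral numbers, with multiplicity, in $(0,1]$. Split the total as
\[ \mu=\sum_\alpha n_\alpha = \deg_{(0,1)}\spe(f)+n_1+\deg_{(1,2)}\spe(f), \]
which is legitimate because the total count of spectral numbers is $\dim H^1(F)=\mu$. The symmetry $\alpha\mapsto 2-\alpha$ is a multiplicity-preserving bijection between the spectral numbers in $(0,1)$ and those in $(1,2)$. Hence $\deg_{(0,1)}\spe(f)=\deg_{(1,2)}\spe(f)=\frac{\mu-n_1}{2}$. Adding $n_1$ gives
\[ \delta(f)=\deg_{(0,1]}\spe(f)=\frac{\mu-n_1}{2}+n_1=\frac{\mu+n_1}{2}. \]

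The only delicate point is justifying that no spectral number equals $0$ or $2$, that is, that the endpoints of $I(0)=(0,1]$ and of its mirror image behave as claimed. This is where I would cite the general statement that spectral numbers lie in $(0,n)$. The derivation from the definition is as follows: the exponent $0$ is impossible since $\alpha>-1$, and symmetry then excludes the exponent $2$. Everything else is bookkeeping.
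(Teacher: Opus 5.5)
Your proof is correct and is essentially the paper's argument: the spectrum has total multiplicity $\mu$, is symmetric about $1$, and lies in $(0,2)$, so the parts in $(0,1)$ and $(1,2)$ each contribute $\frac{\mu-n_1}{2}$, and adding $n_1$ gives $\frac{\mu+n_1}{2}$. Your use of $\alpha>-1$ together with the symmetry to exclude the endpoints $0$ and $2$ makes explicit a point the paper only asserts.
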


\begin{proof}
As explained in Section~\ref{secUpper}
the total degree of the spectrum equals the Milnor number of $f$. Moreover the spectrum is symmetric around $1$ and lies between $0$ and $2$. Hence
\[ \degsp_{I(0)}(f_p)=\frac{\mu}{2}+\frac{n_1}{2}.\] 
\end{proof}

\begin{lemma} Suppose $C$ is an irreducible singular plane curve. For $p\in C_{\sing}$ let $f_p$ be a local equation for $C$ at $p$. Then
\[ p_a(C)-p_g(C)\geq \sum_{p\in C_{\sing}} \delta(f_p).\]
\end{lemma}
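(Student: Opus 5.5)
Using the previous lemma, the inequality reduces to a local statement. For each singular point $p$ we have $\delta(f_p)=\frac{\mu_p+n_{1,p}}{2}$, where $n_{1,p}$ is the coefficient of $s^1$ in $\spe(f_p)$. For an irreducible plane curve, the genus formula gives
\[ p_a(C)-p_g(C)=\sum_{p\in C_{\sing}} \delta_p, \]
where $\delta_p=\dim_{\C}\widetilde{\cO}_{C,p}/\cO_{C,p}$ is the usual delta invariant of the singularity. So it suffices to show, for each plane curve singularity $f$,
\[ \frac{\mu+n_1}{2}\leq \delta_p. \]

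The key input is Milnor's formula $\mu=2\delta_p-r+1$, where $r$ is the number of local branches of $f$ at $p$. Substituting, the required inequality becomes $n_1\leq r-1$. We then identify $n_1$ cohomologically. By the definition of $\spe(f)$, the monomial $s^{\alpha+p+1}$ has exponent $1$ exactly when $\alpha=0$ and $p=0$. Hence
\[ n_1=\dim \Gr_F^{1} C_0, \]
where $C_0$ is the generalized eigenspace of $T^*$ on $H^1(F)$ for the eigenvalue $1$. By the symmetry of the spectrum around $1$, the part of $C_0$ sitting at $s^1$ is the part of type $(1,1)$ in the weight filtration. We expect this piece to have dimension $r-1$: the kernel of $T^*-1$ on $H^1(F)$ is governed by the link of the singularity, which has $r$ components. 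Concretely, the Wang sequence gives $\dim\ker(T^*-1)=\dim H^1(S^3\setminus K)-1=r-1$, and $C_0$ has only weights $1$ and $2$, with the weight-$2$ part spanned by the classes coming from the link components.

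The cleanest way to carry this out is via the known description of spectral numbers of plane curves through the embedded resolution. Alternatively, one can use Steenbrink's result that the number of spectral numbers equal to $1$ (Saito normalization) equals $r-1$ for a plane curve. We would cite this as standard, for instance from \cite{DucoNotes} or \cite[Section II.8]{Kuli}, and it in fact gives equality $\delta(f_p)=\delta_p$.

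The main obstacle is justifying $n_1\leq r-1$ rigorously. This needs a precise link between the Hodge filtration on $C_0$ and the number of branches, and $C_0$ may also contain Jordan blocks of $T^*$. The first approach to try is the weight argument: $N=\log T_u$ maps $\Gr^W_2 C_0$ isomorphically onto $\Gr^W_0 C_0$, and $\Gr^W_0 C_0=0$ since $H^1(F)$ of a curve singularity has no weight-$0$ part for the eigenvalue $1$ (the unipotent part is pure of weight $2$ modulo the image). Hence $C_0=\ker(T^*-1)$ is pure of type $(1,1)$, and its dimension is $r-1$ by the Wang sequence. Summing over $p$ and using irreducibility only through the genus formula then gives the inequality of the lemma, in fact with equality.
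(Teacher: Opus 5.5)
Your reduction is correct but takes a different route from the paper. The paper argues globally with Euler characteristics. It compares $C$ with a smooth curve $C'$ of the same degree via $\chi(C)=\chi(C')+\sum_p\mu_p$, and with its normalization $\tilde C$, whose Euler characteristic exceeds $\chi(C)$ by $\sum_p(r_p-1)$. This gives $2p_a(C)-2p_g(C)=\sum_p(\mu_p+r_p-1)$ directly, without $\delta$-invariants or Milnor's formula. You instead localize via $p_a-p_g=\sum_p\delta_p$ and $\mu=2\delta_p-r+1$.

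The paper's computation is in effect the global sum of Milnor's formulas. Your route proves the stronger pointwise identity $\delta(f_p)=\delta_p$ for any plane curve germ, independent of the ambient curve. Both routes rest on the same local input $n_1(f_p)=r_p-1$, which the paper uses tacitly. Its sentence that each singular point is replaced by $n_1(f_p)$ points only makes $h^1(\tilde C)=h^1(C)-n_1$ correct if read as $r_p$ points with $n_1(f_p)=r_p-1$. You have isolated this input more accurately than the paper. Citing the standard fact that the spectral number $1$ has multiplicity $r-1$ for a plane curve germ closes your argument.

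The weight argument you sketch for that fact, however, is wrong as written, and the step is load-bearing. The Wang sequence only gives $r-1=\dim\ker(T^*-1)\le\dim C_0$, which is the wrong direction for $n_1\le r-1$. So you genuinely need $T^*$ to be semisimple on $C_0$.

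On the eigenvalue-$1$ part, Steenbrink's weight filtration is the monodromy filtration centred at $n=2$, not at $n-1=1$. Hence $N$ induces $\Gr^W_3C_0\cong\Gr^W_1C_0$, not $\Gr^W_2C_0\cong\Gr^W_0C_0$. With your stated isomorphism, $\Gr^W_0C_0=0$ would force $\Gr^W_2C_0=0$, contradicting your own conclusion that $C_0$ has type $(1,1)$.

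A correct argument runs as follows.
\begin{itemize}
\item By the symmetry $s^2\spe(f)(1/s)=\spe(f)(s)$, the coefficient of $s^2$ equals that of $s^0$, which is zero, so $\Gr_F^0C_0=0$.
\item The spectrum has total degree $\mu=\dim H^1(F)$, so no other $\Gr_F^p$ occurs, and $C_0=\Gr_F^1C_0$.
\item Since $C_0$ is a sub-mixed Hodge structure, every Hodge type $(p,q)$ in $\Gr^W_kC_0$ has $p=1$, and Hodge symmetry then forces $k=2$.
\item Thus $C_0$ is pure of type $(1,1)$, and the morphism $N$ of type $(-1,-1)$ vanishes on it.
\item Therefore $C_0=\ker(T^*-1)$, and your Wang sequence count gives $n_1=\dim C_0=r-1$.
\end{itemize}
Alternatively, A'Campo's formula shows directly that $1$ is a root of multiplicity $r-1$ of the characteristic polynomial of $T^*$. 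That already gives $n_1\le\dim C_0=r-1$, which is all the inequality needs.
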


\begin{proof} Let $C'$ be a smooth curve of the same degree as $C$. 
Recall that $h^0(C)=h^2(C)=1$ by irreducibility and that $e(C)=e(C')+\mu(C)$ \cite[Corollary 5.4.4]{Dim}. This implies that $h^1(C)=h^1(C')-\mu$. Let $\tilde{C}$ be a resolution of singularities of $C$. Then each singular point $p$ of $C$ is replaced by $n_1(f_p)$ points.
Hence  $h^1(\tilde{C})=h^1(C)-n_1=h^1(C')-\mu-n_1$. In particular we have $2p_g(C)=2p_a(C)-\mu-n_1\leq 2p_a(C)-2\sum \delta(f_p)$.
\end{proof}

The following proposition shows that the spectral upper bound in the curve case gives precisely the well-known upper $\delta_C\leq p_a(C)-p_g(C)$:

\begin{proposition} We have  $\delta_C\leq \sum_{p\in C_{\sing}} \delta(f_p)\leq p_a(C)-p_g(C)$.
\end{proposition}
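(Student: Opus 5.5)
The statement is a chain of two inequalities, and I will obtain each from a result already stated in the paper.

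\emph{First inequality.} The bound $\delta_C\leq \sum_{p\in C_{\sing}} \delta(f_p)$ is a direct application of Proposition~\ref{prpSpectral} with $n=2$. By definition of $\delta_C$, the curve $C$ lies in the closure of $V^2_{d,\delta_C}$. I will choose a one-parameter family $(C_t)_{t\in\Delta}$ with $C_0=C$ and $C_t$ a nodal curve with exactly $\delta_C$ nodes for $t\neq 0$. Concretely, this means a curve germ in the closure meeting the open Severi stratum, restricted to a small disc. Since $C$ is irreducible and reduced, its singularities are isolated, so the hypotheses of Proposition~\ref{prpSpectral} hold. The proposition then gives $\delta_C\leq\sum_p \delta(f_p)$.

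\emph{Second inequality.} The bound $\sum_p\delta(f_p)\leq p_a(C)-p_g(C)$ is exactly the preceding lemma for irreducible plane curves, so I will simply cite it.

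\emph{Where care is needed.} The only delicate point is the first step: the family must have $C_0=C$ and the nodes of the general member must specialize into the singularities of $C$. One subtlety is whether some nodes could ``disappear'' or go elsewhere. Proposition~\ref{prpSpectral} handles this by localizing at the singular points of $X_0$. A node of $C_t$ whose limit is a smooth point of $C$ is impossible, because singularities specialize to singular points. Semicontinuity then applies at each $p$ to the nodes converging to $p$. This uses a Milnor ball around each $p$, inside which the nodes of $C_t$ converging to $p$ appear in the deformation of $f_p$. Summing over $p$ gives the bound. I expect no real obstacle beyond this bookkeeping. Combining the two inequalities yields the proposition, and hence recovers the classical bound $\delta_C\leq p_a(C)-p_g(C)$.
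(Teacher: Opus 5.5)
Your proposal is correct and follows the paper's proof, which simply combines Proposition~\ref{prpSpectral} (applied with $n=2$ to a one-parameter degeneration of $\delta_C$-nodal curves to $C$) with the preceding lemma. Your extra remarks on choosing the arc in the closure of the Severi variety and on nodes specializing only into $C_{\sing}$ make explicit what the paper leaves implicit.
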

\begin{proof}
Combine Proposition~\ref{prpSpectral} with the previous Lemma.
\end{proof} 
\begin{remark} As explained in \cite{CiGa26}, in the curve case we have $\delta_C=p_a(C)-p_g(C)$. Hence the spectral bound is sharp in the curve case. It is widely expected that in higher dimension the spectral bound is not sharp for most choices of $(d,n)$.
\end{remark}
\bibliographystyle{plain}
\bibliography{remke2}

\begin{thebibliography}{1}

\bibitem{CiGa26}
C.~Ciliberto and C.~Galati.
\newblock On nodal deformations of singular surfaces in $\mathbb{P}^3$.
\newblock Preprint, available at \texttt{https://arxiv.org/abs/2602.09177},
  2026.

\bibitem{Dim}
A.~Dimca.
\newblock {\em Singularities and topology of hypersurfaces}.
\newblock Universitext. Springer-Verlag, New York, 1992.

\bibitem{Kuli}
V.~S. Kulikov.
\newblock {\em Mixed {H}odge structures and singularities}, volume 132 of {\em
  Cambridge Tracts in Mathematics}.
\newblock Cambridge University Press, Cambridge, 1998.

\bibitem{SaiMilTj}
K.~Saito.
\newblock Quasihomogene isolierte {S}ingularit\"aten von {H}yperfl\"achen.
\newblock {\em Invent. Math.}, 14:123--142, 1971.

\bibitem{SteSC}
J.~H.~M. Steenbrink.
\newblock Semicontinuity of the singularity spectrum.
\newblock {\em Invent. Math.}, 79:557--565, 1985.

\bibitem{DucoNotes}
D.~van Straten.
\newblock The spectrum of hypersurface singularities.
\newblock Preprint, available at \texttt{https://arxiv.org/abs/2003.00519},
  2020.

\bibitem{VarSC}
A.~N. Varchenko.
\newblock Semicontinuity of the spectrum and an upper bound for the number of
  singular points of the projective hypersurface.
\newblock {\em Dokl. Akad. Nauk SSSR}, 270:1294--1297, 1983.

\end{thebibliography}

\end{document}